\title{Optimality Conditions for Convex Stochastic Optimization Problems in Banach Spaces with Almost Sure State Constraints}
\author{Caroline Geiersbach\thanks{Weierstrass Institute, 10117 Berlin, Germany 
  (\texttt{caroline.geiersbach@wias-berlin.de})}
\and Winnifried Wollner\thanks{Fachbereich Mathematik, Technische Universit\"at Darmstadt, 64293 Darmstadt, Germany, 
  (\texttt{wollner@mathematik.tu-darmstadt.de})}}
\newcommand{\R}{\mathbb{R}}
\newcommand{\cA}{\mathcal{A}}
\newcommand{\cB}{\mathcal{B}}
\renewcommand{\u}{x_1}
\newcommand{\y}{x_2}
\newcommand{\z}{\lambda}
\newcommand{\Y}{X_2}
\newcommand{\Z}{\Lambda}
\newcommand{\U}{X_1}
\newcommand{\by}{\bar{x}_2}
\newcommand{\bx}{\bar{x}}
\newcommand{\bu}{\bar{x}_1}
\newcommand{\blam}{\bar{\lambda}}
\newcommand{\p}{\lambda_e}
\newcommand{\lami}{\lambda_i}
\newcommand{\bp}{\bar{\lambda}_e}
\newcommand{\bl}{\bar{\lambda}_i}
\newcommand{\bz}{\bar{\lambda}}
\newcommand{\E}{\mathbb{E}}
\newcommand{\pP}{\mathbb{P}}
\newcommand{\D}{\text{ d}}
\newcommand{\Yad}{X_{2,\textup{ad}}}
\DeclareMathOperator*{\esssup}{ess\,sup}
\theoremstyle{definition}
\newtheorem{theorem}{Theorem}[section]
\newtheorem{lemma}[theorem]{Lemma}
\newtheorem{definition}[theorem]{Definition}
\newtheorem{corollary}[theorem]{Corollary}
\newtheorem{assumption}[theorem]{Assumption}
\newtheorem{remark}[theorem]{Remark}
\begin{document}
\maketitle
\begin{abstract}
  We analyze a convex stochastic optimization problem where the
  state is assumed to belong to the Bochner space of essentially bounded
  random variables with images in a reflexive and separable Banach space.
  For this problem, we obtain optimality conditions that are, with an
  appropriate model, necessary and sufficient. Additionally, the Lagrange
  multipliers associated with optimality conditions are integrable
  vector-valued functions and not only measures.
  A model problem is given demonstrating the application to
  PDE-constrained optimization under uncertainty with an outlook for further applications.
\end{abstract}

\maketitle
\section{Introduction}
Let $\U$ and $\Y$ be real, reflexive, and separable Banach spaces.
$(\Omega, \mathcal{F}, \pP)$ denotes a complete probability space, where
$\Omega$ represents the sample space, $\mathcal{F} \subset 2^{\Omega}$ is
the $\sigma$-algebra of events on the power set of $\Omega$, and
$\pP\colon \Omega \rightarrow [0,1]$ is a probability measure. We assume
$C_1 \subset \U$ is nonempty, closed, and convex;
$\Yad(\u,\omega) \subset \Y$ is assumed to be nonempty, closed, and
convex for all $\u \in C_1$ and almost all $\omega \in \Omega$.
We are interested
in a convex stochastic optimization problem of the form
\begin{equation}
 \label{eq:model-problem-abstract}
 \begin{aligned}
   \min_{x_1,x_2(\cdot)} \quad &\left\lbrace\E[J(\u, \y(\cdot))]
   = \int_{\Omega} J(\u, \y(\omega)) \D \pP(\omega)\right\rbrace\\
   &\text{s.t.}\quad\left\{\begin{aligned}\u &\in C_1, \\
   \y(\omega) &\in \Yad(\u,\omega) \quad \text{a.s.},
   \end{aligned}\right.
 \end{aligned}
\end{equation}
where $J$ is a convex real-valued mapping.
In this model, the variable $\u$, unlike $\y$, is independent of the
random data. As such, this problem can be interpreted as a
two-stage stochastic optimization problem. In the formulation \eqref{eq:model-problem-abstract}, it is assumed that the function $\omega \mapsto \y(\omega)$ is provided at the outset, which gives all possible decisions for each $\omega$. This viewpoint differs in spirit from a stochastic optimization problem with recourse, where the second-stage ``decision'' $\y$ is made only \textit{after} observing a random element $\omega$. However, under mild assumptions, these problems can be shown to be equivalent to each other; see, e.g., \cite[Section 3]{Rockafellar1976}. This fact is also known as the interchangeability principle for two-stage programming, see \cite[Section 2.3]{Shapiro2009}. 

Such problems are of interest for applications to optimization with
partial differential equations (PDEs) under uncertainty, where the set
to which $\y(\omega)$ belongs includes those states solving a PDE. This
field is a rapidly developing one, with many developments in understanding
the modeling, theory, and design of efficient algorithms; see,
e.g.,~\cite{Conti2008, Kouri2013, Rockafellar2015, Kouri2016, Alexanderian2017, VanBarel2017, Guth2021, Chen2019, Geiersbach2020b}
and the references therein. 
So far, research has mostly been limited to the case where
the control (in our notation, the first-stage variable $\u$) has been
subject to additional constraints. In this case, optimality conditions
have already been established for risk-averse problems
in~\cite{Kouri2018,Kouri2019a}. However, additional constraints on the state
(here, $\y$), beyond a uniquely solvable equation,
have yet to be investigated thoroughly. Although chance
constraints have been handled in such applications,
cf.~\cite{Farshbaf-Shaker2018}, the treatment of pointwise almost sure
constraints on the state appear to be missing from the literature.

As a first step in this treatment, optimality conditions play a central role,
and we pursue this in the current paper.
Pointwise state constraints, without uncertainty, have received some
attention over the last years, see, e.g.,~\cite{Polak:1997} for a theory
of consistent approximations for optimal control problems with ODEs,
or~\cite{Troeltzsch:2010} for the function space analysis for PDE
constrained problems.
For the latter, optimality conditions require Lagrange multipliers coming, in general,
from the non-separable
space of regular Borel measures, see, e.g.,~\cite{Casas1986,CasasBonnans1988}.
Due to the irregular nature of the multipliers, 
penalty~\cite{BergouniouxKunisch:1997,Ulbrich:2011,Hintermueller2014}
and barrier approaches~\cite{SchielaWollner:2008,Schiela:2007}
have been investigated on a function space level.
However, under the mild assumption of bounded,
rather than square integrable, problem data, it could be shown that
multipliers of a model problem can be found in a more regular,
separable, space, see~\cite{CasasMateosVexler2014,BrennerSung2017,BrennerSungWollner:2020b}. Similar observations are true for parabolic optimization problems, see~\cite{ChristofVexler:2021}.

In this paper, we are focused on obtaining optimality conditions in the
case where $\y$ belongs to the Bochner space $L^\infty(\Omega,\Y)$. This
choice is motivated by the goal of including problems where there is an
almost sure bound such as
\begin{equation*}
  \y(\omega) \leq_K \psi(\omega),
\end{equation*}
where $\psi \in L^\infty(\Omega,\Y)$ and $\leq_K$ represents a partial
order on $X_2$. An example with this type of inequality is given in
\cref{subsection:Example}. The choice of $L^p(\Omega,\Y)$ for
$p<\infty$ is not appropriate, as the cone
$\{v \in L^p(\Omega,\Y): v(\omega) \leq_K 0\}$ contains no interior points;
this property is especially important in the establishment of Lagrange
multipliers for our application. Therefore, we will view the problem
presented in~\eqref{eq:model-problem-abstract} in the framework of
two-stage stochastic optimization (for an introduction,
see~\cite{Shapiro2009, Pflug2014}). This framework allows us to
generalize results from a series of papers by Rockafellar and
Wets~\cite{Rockafellar1975, Rockafellar1976,Rockafellar1976a,Rockafellar1976b},
who established optimality theory of general convex stochastic
optimization problems with states belonging to the space
$L^\infty(\Omega,\R^n)$. As the class of problems we are treating involve
equality constraints, we include that theory here, which is not covered
by the papers~\cite{Rockafellar1975, Rockafellar1976,Rockafellar1976a,Rockafellar1976b}.
Additionally, we emphasize that care must be taken in our setting,
where the random variables are vector-valued. 

While much of the literature on which we base our analysis is
classical, we note that the study of problems of the form
\eqref{eq:model-problem-abstract} remain an active area of research
thanks to the difficulties presented in specific applications. These
difficulties are present not only in optimal control problems with
PDEs but also those in mathematical finance, see for instance
\cite{Pennanen2018, Pennanen2018a}. In \cite{Pennanen2018}, the
authors develop duality theory in the same spirit as we do, focusing
on the case of where an integral functional is defined over variables
$x:\Omega \times \R \rightarrow \R^d$ of bounded variation
with finite dimensional image. Also of
relevance are the recent works \cite{Leclere2014, Leclere2019}. The
first of these works also considers optimality conditions for problems
similar to ours, although the exposition is limited to random vectors,
i.e., with finite-dimensional images. The latter work also includes vector-valued random variables and focuses on a relaxation of problems like \eqref{eq:model-problem-abstract}, where the almost sure constraint is replaced by its conditional expectation. This is done in view of justifying tractable decomposition methods with subproblems that are easier to solve.

We will proceed by introducing our notation and proving essential results
about subdifferentiability of convex integral functionals on the space 
$L^\infty(\Omega,X)$ in \cref{sec:background}. The core of the paper
is contained in \cref{sec:problem-formulation}, where we use the
perturbation approach to show the existence of saddle points for a
suitably tailored generalized Lagrangian.
This approach allows us to look for Lagrange
multipliers in the space $L^1(\Omega,X^*)$, instead of $(L^\infty(\Omega,X))^*$,
and provide Karush--Kuhn--Tucker conditions for our problem.
In \cref{sec:model-problem}, we show an application to
PDE-constrained optimization under uncertainty. Here we will see that
while a direct addition of randomness to a typical model problem does not
fit into our theory, a suitable penalization does. This allows the
approximation of PDE-constrained problems with almost sure
state constraints by a sequence of problems admitting multipliers in
$L^1(\Omega,X^*)$.
We close with some remarks
in \cref{sec:Conclusion}.

\section{Background and Notation}
\label{sec:background}
Throughout, we shall employ the following notation. We assume that $X$ is
a real, reflexive, and separable space; the dual is denoted by $X^*$ and the
canonical dual pairing is written as $\langle \cdot, \cdot \rangle_{X^*,X}$.
Given a set $C \subset X$, $\delta_C$ denotes the indicator function,
where $\delta_C(x) = 0$ if $x \in C$ and $\delta_C(x) =\infty$ otherwise.
The interior of a set $C$ is denoted by $\textup{int}\,C.$
The sum of two sets $A$ and $B$ with $\lambda \in \R$ is given by
$A+\lambda B:=\{ a+\lambda b: a \in A, b \in B\}.$
We recall that for a proper function $h:X \rightarrow \R \cup \{ \infty \}$,
the subdifferential (in the sense of convex analysis) is the set-valued
operator defined by
\begin{equation*}
  \partial h: X \rightrightarrows X^*: x \mapsto \{ q \in X^*: \langle q, y - x \rangle_{X^*,X} + h(x) \leq h(y) \quad  \forall y \in X \}.
\end{equation*} 
The domain of $h$ is denoted by
$\textup{dom}(h):= \{ x \in X: h(x) < \infty \}.$
Given $K\subset X$, the support function of $K$ is denoted by $\sigma(K,v) := \sup_{x \in K} \langle v, x \rangle_{X^*,X}$ for all $v \in X^*$. A strongly $\pP$-measurable mapping from $\Omega$ to a Banach space $X$ is
referred to as an $X$-valued random variable. As the underlying
probability space is considered fixed, we will frequently write simply
``measurable'' instead of ``$\pP$-measurable.'' Additionally, since we
only consider separable spaces, weak and strong measurability coincide,
in which case we can simply refer to measurability of a random
variable.\footnote{More precisely, for $y:\Omega \rightarrow X$, the
  following assertions are equivalent: 1) $y$ is strongly measurable
  and 2) $y$ is separably-valued and
  measurable~\cite[Corollary 1.1.10]{Hytoenen2016}.}

Given a Banach space $X$ equipped with the norm $\lVert \cdot \rVert_X$,
the Bochner space $L^r(\Omega, X)$ is the set of all (equivalence classes
of) $X$-valued random variables having finite norm, where the norm is
given by
\begin{equation*}
  \lVert y \rVert_{L^r(\Omega,X)}:= \begin{cases}
  (\int_\Omega \lVert y(\omega) \rVert_X^r \D \pP(\omega))^{1/r}, \quad &1 \leq r < \infty,\\
  \esssup_{\omega \in \Omega} \lVert y(\omega) \rVert_X, \quad &r=\infty.
\end{cases}
\end{equation*} 
An $X$-valued random variable $x$ is Bochner integrable if there exists a
sequence $\{ x_n\}$ of $\pP$-simple functions $x_n:\Omega \rightarrow X$
such that $\lim_{n \rightarrow \infty} \int_{\Omega} \lVert x_n(\omega)-x(\omega) \rVert_X \D \pP(\omega) = 0$.
The limit of the integrals of $x_n$ gives the Bochner integral
(the expectation), i.e., 
\begin{equation*}
  \E[x]:=\int_\Omega x(\omega) \D \pP(\omega) = \lim_{n \rightarrow \infty} \int_{\Omega} x_n(\omega) \D \pP(\omega).
\end{equation*}
Clearly, this expectation is an element of $X$.

Recall that a property is said to hold \textit{almost surely} (a.s.)
provided that the set (in $\Omega$) where the 
property does not hold is a set of measure zero. As an example, two random
variables $\xi, \xi'$ are said to be equal almost surely, $\xi = \xi'$
a.s., if and only if
$\pP(\{ \omega \in \Omega: \xi(\omega) \neq \xi'(\omega)\}) = 0$,
or equivalently,
$\pP(\{ \omega \in \Omega: \xi(\omega) = \xi'(\omega)\}) = 1$. 

\subsection{Subdifferentiability of convex integral functionals on $L^\infty(\Omega,X)$}
\label{subsec:subdifferentiability-Linf}
In order to obtain optimality conditions for a problem of the
form~\eqref{eq:model-problem-abstract}, we will first provide some
background on convex integral functionals defined on the space
$L^\infty(\Omega,X)$, where $X$ is assumed to be a real, reflexive, and
separable Banach space.\footnote{While we continue using the probability
  space $(\Omega,\mathcal{F}, \pP)$, the results of this section also
  hold for more general $\sigma$-finite complete measure spaces.} 
We denote the $\sigma$-algebra of Borel sets on $X$ by $\mathcal{B}$.
We study convex functionals of the form
\begin{equation}
  \label{eq:integral-functional}
  I_f(x) := \int_{\Omega} f(x(\omega),\omega) \D\pP(\omega),
\end{equation}
where $x:\Omega \rightarrow X$ and
$f:X \times \Omega  \rightarrow \R \cup \{ \infty \}$.
The function $f$ is called a convex integrand if
$f_\omega: = f(\cdot,\omega)$ is convex for every $\omega$ (it is no
loss of generality to redefine a functional that is only convex for
almost every $\omega$).
This integrand is called \textit{normal} if it is not identically
infinity, it is $(\mathcal{B}\times\mathcal{F})$-measurable, and
$f_\omega$ is lower semicontinuous in $X$ for each $\omega \in \Omega$.
An example of a function that is normal is one that is finite everywhere
and Carath\'eodory, meaning $f$ measurable in $\omega$ for fixed $x$
and continuous in $x$ for fixed $\omega$. 
Normality of $f$ makes it superpositionally measurable, meaning
$\omega \mapsto f(x(\omega),\omega)$ is measurable if
$x:\Omega \rightarrow X$ is measurable; see, e.g.,~\cite[Lemma 8.2.3]{Aubin1990}. 

If $\omega \mapsto f(x(\omega),\omega)$ is majorized by an integrable
function $g$, i.e., $|f(x(\omega),\omega)| \leq g(\omega)$ a.s., then the integral functional~\eqref{eq:integral-functional} is
finite; if no such majorant exists, by convention, we set $I_f(x)=\infty$.
The conjugate of the normal convex integrand $f_\omega$ is the function
$f_\omega^*$ defined on $X^*$ by 
\begin{equation*}
\label{eq:conjugate-integrand-definition}
f_\omega^*(x^*) := \sup_{x \in X} \{ \langle x^*,x \rangle_{X^*,X} - f_\omega(x)\}.
\end{equation*}
By~\cite[Proposition 6.1]{Levin1975}, $f_\omega^*$ is a normal convex
integrand and $(f_\omega^*)^*=f_\omega.$
We recall, see, e.g.,~\cite[Proposition 6.5.4]{Aubin1990} 
that if $f_\omega$ is convex, 
\begin{equation}
\label{eq:conjugate-subdifferential}
x^* \in \partial f_\omega(x) \quad \text{if and only if} \quad \langle x^*,x \rangle_{X^*,X} = f_\omega(x) + f_\omega^*(x^*).
\end{equation}

Even if the Radon--Nikodym property is satisfied for $X$, there is not
generally an isometry between $(L^\infty(\Omega,X))^*$ and $L^1(\Omega,X^*)$.
However, there is a useful decomposition on this dual space; namely,
elements can be decomposed into absolutely continuous and singular parts.
A continuous linear functional $v \in (L^\infty(\Omega,X))^*$ of the form
\begin{equation*}
  v(x) = \int_{\Omega} \langle x^*(\omega), x(\omega) \rangle_{X^*,X} \D \pP(\omega)
\end{equation*}
for some $x^* \in L^1(\Omega, X^*)$ is said to be absolutely continuous.
These functionals form a closed subspace of $(L^\infty(\Omega,X))^*$ that
is isometric to $L^1(\Omega,X^*)$. This subspace has a complement consisting
of singular functionals, defined next.
\begin{definition}
\label{def:singular-functionals}
A functional $v^\circ \in (L^\infty(\Omega,X))^*$ is called
\textit{singular (relative to $\pP$)} if there exists a sequence
$\{ F_n\} \subset  \mathcal{F}$ with $F_{n+1} \subset F_n$ for all $n$,
$\pP(F_n) \rightarrow 0$ as $n\rightarrow \infty$, and $ v^\circ(x) = 0$
for all $x \in L^\infty(\Omega, X)$ satisfying $x(\omega) \equiv 0$ for
almost all $\omega \in F_n$ for some $n$. 
\end{definition}
The following decomposition result was proven
in~\cite[Appendix 1, Theorem 3]{Ioffe1972} (with a slight correction to the original proof in \cite{Levin1974}).
\begin{theorem}[Ioffe and Levin]
Each functional $v^* \in (L^\infty(\Omega,X))^*$ has a unique decomposition 
\begin{equation}
\label{eq:decomposition-dual-space}
v^* = v + v^\circ,
\end{equation}
where $v$ is absolutely continuous, $v^\circ$ is singular relative to
$\pP$, and 
\begin{equation*}
  \lVert v^* \rVert_{(L^\infty(\Omega,X))^*} = \lVert v \rVert_{(L^\infty(\Omega,X))^*}+\lVert v^\circ \rVert_{(L^\infty(\Omega,X))^*}.
\end{equation*}
\end{theorem}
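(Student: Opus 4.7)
The plan is to adapt the classical Yosida--Hewitt decomposition of $L^\infty(\Omega)^*$ into an absolutely continuous and a purely finitely additive part to the vector-valued setting. For each fixed $x \in X$, the scalar map $\phi_x \colon L^\infty(\Omega) \to \R$ defined by $\phi_x(f) := v^*(f\cdot x)$ is a continuous linear functional with $\lVert \phi_x\rVert \leq \lVert v^*\rVert \lVert x\rVert_X$. Yosida--Hewitt decomposes $\phi_x = \phi_x^{ac} + \phi_x^{sing}$, with $\phi_x^{ac}$ represented by integration against some $h_x \in L^1(\Omega)$ satisfying $|h_x(\omega)| \leq \lVert v^*\rVert \lVert x\rVert_X$ almost surely. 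Fix a countable $\mathbb{Q}$-linear dense subset $\{x_n\}\subset X$. By linearity in $x$ and uniqueness of the Yosida--Hewitt decomposition, $h_x$ is $\mathbb{Q}$-linear in $x$ on the rational span modulo a countable union of null sets; after discarding these, extend by continuity to define $x^*(\omega) \in X^*$ via $\langle x^*(\omega), x_n\rangle_{X^*,X} = h_{x_n}(\omega)$, with $\lVert x^*(\omega)\rVert_{X^*} \leq \lVert v^*\rVert$ a.s. Measurability of $x^*$ as a map into the separable dual $X^*$ follows from measurability of each $h_{x_n}$. Set $v(x) := \int_\Omega \langle x^*(\omega), x(\omega)\rangle_{X^*,X} \D\pP(\omega)$ and $v^\circ := v^* - v$.

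To verify $v^\circ$ is singular, note that each $\phi_{x_n}^{sing}$ admits a shrinking sequence $\{F_k^{(n)}\}$ with $\pP(F_k^{(n)}) \to 0$ on which it vanishes, and a diagonal construction produces one sequence $\{F_k\}$ annihilating every $\phi_{x_n}^{sing}$. Density of $\{x_n\}$ in $X$ and of simple functions in $L^\infty(\Omega,X)$ then extends this property to arbitrary test functions, matching \cref{def:singular-functionals}. For uniqueness, the difference $w$ of two decompositions is both absolutely continuous (with density $y^* \in L^1(\Omega, X^*)$) and singular (with sequence $\{F_n\}$); for any $\xi \in L^\infty(\Omega, X)$, singularity gives $w(\xi\chi_{\Omega\setminus F_n}) = 0$, whence $w(\xi) = \int_{F_n}\langle y^*,\xi\rangle_{X^*,X} \D\pP \to 0$ by dominated convergence, so $w \equiv 0$ and $y^* = 0$.

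For the norm identity, $\lVert v^*\rVert \leq \lVert v\rVert + \lVert v^\circ\rVert$ is the triangle inequality. Conversely, given $\epsilon > 0$, pick $y, z$ in the unit ball of $L^\infty(\Omega,X)$ with $v(y) > \lVert v\rVert - \epsilon$ and $v^\circ(z) > \lVert v^\circ\rVert - \epsilon$, and form $w_n := y\chi_{\Omega\setminus F_n} + z\chi_{F_n}$. Since both $y\chi_{\Omega\setminus F_n}$ and $z\chi_{\Omega\setminus F_n}$ vanish on $F_n$, singularity gives $v^\circ(w_n) = v^\circ(z\chi_{F_n}) = v^\circ(z)$, while $v(w_n) \to v(y)$ by dominated convergence since $\pP(F_n)\to 0$. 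Thus $v^*(w_n) \to v(y) + v^\circ(z) > \lVert v\rVert + \lVert v^\circ\rVert - 2\epsilon$ with $\lVert w_n\rVert_\infty \leq 1$, yielding the reverse bound.

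The hardest step is the construction of $x^*$ as a genuinely $X^*$-valued measurable function: pointwise linearity of $x \mapsto h_x(\omega)$ holds only modulo $x$-dependent null sets, and separability of $X$ is essential to consolidate these into a single null set through the countable dense family $\{x_n\}$. A related subtlety is producing the uniform shrinking sequence $\{F_k\}$ witnessing singularity across \emph{all} test functions in $L^\infty(\Omega,X)$ simultaneously, which again rests on countable density and diagonal selection.
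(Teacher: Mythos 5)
The paper does not prove this theorem; it is cited from Ioffe--Levin, so your Yosida--Hewitt-based strategy is a reasonable reconstruction of the intended route. However, there is a concrete gap at the heart of your construction of the absolutely continuous part. You claim that the density $h_x \in L^1(\Omega)$ of the countably additive part of $\phi_x$ satisfies $|h_x(\omega)| \leq \lVert v^* \rVert\, \lVert x \rVert_X$ \emph{almost surely}. What the estimate $|\phi_x(f)| \leq \lVert v^*\rVert\,\lVert f\rVert_{L^\infty}\lVert x\rVert_X$ actually gives is the bound $\lVert h_x \rVert_{L^1(\Omega)} \leq \lVert v^*\rVert\,\lVert x\rVert_X$; there is no pointwise control. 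Already in the scalar case $X = \R$ the absolutely continuous part of an element of $(L^\infty(\Omega))^*$ is a general $L^1$ function, not an essentially bounded one, so the claimed a.s.\ bound is false. This matters because your subsequent step --- extending the a.e.-defined $\mathbb{Q}$-linear map $x_n \mapsto h_{x_n}(\omega)$ by continuity to an element $x^*(\omega) \in X^*$ --- requires, for almost every fixed $\omega$, a bound $|h_{x_n}(\omega)| \leq C(\omega)\lVert x_n\rVert_X$ uniform over the countable dense family, with $C(\omega)$ finite and integrable. The $L^1$ bound provides no such pointwise majorant, so without further work the functional $x \mapsto h_x(\omega)$ need not be bounded for a.e.\ $\omega$, the extension fails, and even if $x^*(\omega)$ were defined you would have no proof that $\lVert x^*(\cdot)\rVert_{X^*}$ is integrable.

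The standard repair is to first produce a scalar majorant: define, for $0 \leq f \in L^\infty(\Omega)$, the modulus functional $|v^*|(f) := \sup\{\, v^*(\xi) : \xi \in L^\infty(\Omega,X),\ \lVert \xi(\omega)\rVert_X \leq f(\omega) \text{ a.e.}\,\}$, verify that it is additive and extends to a bounded positive linear functional on $L^\infty(\Omega)$, and apply the scalar Yosida--Hewitt decomposition to \emph{it}, obtaining a density $g \in L^1(\Omega)$, $g \geq 0$. One then shows $|h_x(\omega)| \leq g(\omega)\lVert x\rVert_X$ a.e.\ for each $x$, which is exactly the $\omega$-pointwise, $x$-uniform bound your extension step needs, and it delivers $\lVert x^*(\omega)\rVert_{X^*} \leq g(\omega)$, hence $x^* \in L^1(\Omega,X^*)$. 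A secondary, smaller issue: in verifying singularity of $v^\circ$ you appeal to density of simple functions in $L^\infty(\Omega,X)$; in the essential-supremum norm one only has uniform approximation by \emph{countably}-valued functions, whose partial sums do not converge in $L^\infty(\Omega,X)$, so the passage from the countable family $\{f \cdot x_n\}$ to arbitrary test functions needs a more careful truncation argument than stated. The uniqueness argument and the norm-additivity argument via $w_n = y\chi_{\Omega\setminus F_n} + z\chi_{F_n}$ are correct as written, granted the existence part.
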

The next result characterizes the convex conjugate of a
functional~$I_f$ defined
on $L^\infty(\Omega,X)$. By definition, the convex functional
on $(L^\infty(\Omega,X))^*$ that is conjugate to $I_f$ is given by
\begin{equation}
\label{eq:If-star}
I_f^*(v^*) := \sup_{z \in L^\infty(\Omega,X)} \{ v^*(z) - I_{f}(z)\}.
\end{equation}
This functional is closely related to the integral functional $I_{f^*}$,
where $f^*$ denotes the conjugate of the normal convex integrand $f$ as
before.
The following theorem relates $I_f^*$ to $I_{f^*}$ and was proven for
$X = \R^n$ in~\cite[Theorem 1]{Rockafellar1971a} and later for separable
(generally non-reflexive) Banach spaces in~\cite[Theorem 6.4]{Levin1975}.
 
\begin{theorem}[Levin]
\label{thm:conjugate-Linfty-functional}
Assume $f$ is a normal convex integrand and $I_f(x) < \infty$ for some $x \in L^\infty(\Omega,X)$.
Then the functional $I_f^*$ can be represented by the decomposition
\begin{equation}
\label{eq:representation-dual-integral}
I_f^*(v^*) = I_{f^*}(x^*) + \sigma({\textup{dom} (I_f)},v^\circ),
\end{equation}
where $x^* \in L^1(\Omega, X^*)$ corresponds to the absolutely
continuous part of $v^*$ and $v^\circ  \in (L^\infty(\Omega,X))^*$ corresponds
to the singular part of $v^*$, and $\sigma({\textup{dom} (I_f)},v^\circ)$
denotes the support functional of $\textup{dom}(I_f)$ in $v^\circ.$ 
\end{theorem}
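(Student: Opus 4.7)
The plan is to prove \eqref{eq:representation-dual-integral} as two opposing inequalities. For the upper bound $I_f^*(v^*) \leq I_{f^*}(x^*) + \sigma(\textup{dom}(I_f), v^\circ)$, I restrict the supremum in \eqref{eq:If-star} to $z \in \textup{dom}(I_f)$ (outside, the supremand is $-\infty$), split $v^*(z) = v(z) + v^\circ(z)$ via the Ioffe--Levin decomposition \eqref{eq:decomposition-dual-space}, and apply the Fenchel--Young inequality $\langle x^*(\omega), z(\omega)\rangle_{X^*,X} - f(z(\omega),\omega) \leq f^*(x^*(\omega),\omega)$, which holds pointwise a.s.\ directly from the definition of $f_\omega^*$. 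The left-hand side is integrable since $z \in L^\infty$, $x^* \in L^1$, and $z \in \textup{dom}(I_f)$, so integrating and combining with the estimate $v^\circ(z) \leq \sigma(\textup{dom}(I_f), v^\circ)$ yields the bound after taking the supremum over $z$.

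For the reverse inequality, fix $\varepsilon > 0$ and assume the right-hand side is finite (the infinite case is handled analogously by driving one of the two terms to $+\infty$ without quantitative bounds). The task is to produce a single test vector $z \in L^\infty(\Omega,X)$ with
\begin{equation*}
v^*(z) - I_f(z) \geq I_{f^*}(x^*) + \sigma(\textup{dom}(I_f), v^\circ) - C\varepsilon,
\end{equation*}
assembled in three moves. First, pick $z_1 \in \textup{dom}(I_f)$ with $v^\circ(z_1) \geq \sigma(\textup{dom}(I_f), v^\circ) - \varepsilon$. Second, invoke a measurable selection theorem for the set-valued map $\omega \rightrightarrows \{x \in X : \langle x^*(\omega), x\rangle_{X^*,X} - f(x,\omega) \geq f^*(x^*(\omega),\omega) - \varepsilon\}$, which has nonempty values by definition of $f^*$ and a $(\mathcal{B}\times\mathcal{F})$-measurable graph by normality of $f$ together with measurability of $x^*$; a Castaing-type selector on the separable space $X$ yields a measurable $\varepsilon$-approximate maximizer $z_0$, to be rendered essentially bounded below. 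Third, using the sequence $\{F_n\}$ witnessing singularity of $v^\circ$ in \cref{def:singular-functionals}, pick $n$ large enough that $\int_{F_n}(\lVert x^*(\omega)\rVert_{X^*}\lVert z_1\rVert_{L^\infty(\Omega,X)} + |f(z_1(\omega),\omega)| + |f^*(x^*(\omega),\omega)|)\D\pP(\omega) < \varepsilon$, and set $z := z_1 \mathbf{1}_{F_n} + z_0 \mathbf{1}_{\Omega\setminus F_n}$. Since both $z_1\mathbf{1}_{\Omega\setminus F_n}$ and $z_0\mathbf{1}_{\Omega\setminus F_n}$ vanish on $F_n$, the singularity of $v^\circ$ gives $v^\circ(z) = v^\circ(z_1)$, while a direct computation of $v(z) - I_f(z)$ using the pointwise $\varepsilon$-slack on $\Omega\setminus F_n$ and the integrability on $F_n$ produces the claimed estimate; letting $\varepsilon \downarrow 0$ concludes the argument.

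The main obstacle is the second move, where one must simultaneously obtain measurability, pointwise $\varepsilon$-approximation of $f^*(x^*(\omega),\omega)$, and essential boundedness of $z_0$. Nonemptiness and measurability follow from the normality of $f$ and the separability of $X$, but essential boundedness requires a truncation-exhaustion argument: the sets
\begin{equation*}
A_R := \bigl\{\omega \in \Omega : \exists\, x \in X \text{ with } \lVert x\rVert_X \leq R \text{ and } \langle x^*(\omega),x\rangle_{X^*,X} - f(x,\omega) \geq f^*(x^*(\omega),\omega) - \varepsilon\bigr\}
\end{equation*}
satisfy $A_R \uparrow \Omega$ a.s.\ as $R \to \infty$, so choosing $R$ with $\pP(\Omega \setminus A_R)$ small enough to be merged into $F_n$ and setting $z_0 := z_1$ on the exceptional set $\Omega \setminus A_R$ recovers a bounded selector at the cost of an additional $O(\varepsilon)$ error. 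Measurability of $f^*(x^*(\cdot),\cdot)$, which is tacitly used throughout, is provided by the fact that $f^*$ is again a normal convex integrand, as recalled in the paragraph preceding the statement.
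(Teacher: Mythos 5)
The paper does not prove this statement at all: it is quoted as a known result, attributed to \cite[Theorem 1]{Rockafellar1971a} for $X=\R^n$ and to \cite[Theorem 6.4]{Levin1975} for separable Banach spaces, so there is no in-paper proof to compare against. Your reconstruction follows the classical two-inequality scheme of those references and is sound in its essentials: the upper bound via pointwise Fenchel--Young plus $v^\circ(z)\le\sigma(\textup{dom}(I_f),v^\circ)$ is immediate (the interchange of $v(z)-I_f(z)$ with a single integral is justified because both $\langle x^*(\cdot),z(\cdot)\rangle$ and $f(z(\cdot),\cdot)$ are integrable for $z\in\textup{dom}(I_f)$, and $I_{f^*}>-\infty$ by the paper's \cref{rem:integral-well-defined}); the lower bound via an $\varepsilon$-maximizer of the support functional spliced on $F_n$ with a measurable $\varepsilon$-approximate maximizer of the conjugate off $F_n$ is exactly the right construction, and your observation that $z-z_1$ vanishes on $F_n$ so $v^\circ(z)=v^\circ(z_1)$ is the key point. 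Two details deserve more care than you give them. First, the selection step: the multifunction has measurable graph but is not obviously Castaing-measurable a priori, so you should invoke the Aumann--von~Neumann selection theorem (equivalently, the projection theorem), which is where the standing completeness of $(\Omega,\mathcal{F},\pP)$ and the Polish structure of the separable reflexive $X$ are actually used; your truncation via the sets $A_R$ to recover essential boundedness is correct, provided you also note that the exceptional set $\Omega\setminus A_R$ is handled exactly like $F_n$ (absolute continuity of the integrals of $\langle x^*,z_1\rangle$, $f(z_1(\cdot),\cdot)$ and $f^*(x^*(\cdot),\cdot)$), not literally ``merged into $F_n$''. Second, the case $I_{f^*}(x^*)=+\infty$ is not quite ``analogous'': one must truncate $f^*(x^*(\cdot),\cdot)$ at level $M$, run the same selection for the target $\min(f^*(x^*(\omega),\omega),M)-\varepsilon$, and let $M\to\infty$ by monotone convergence (the negative part of $f^*(x^*(\cdot),\cdot)$ is integrable by \eqref{eq:fomega-star-ineq}). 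With those two points made precise, your argument is a complete proof of the cited theorem.
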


\begin{remark}
\label{rem:integral-well-defined}
The assumption that $I_f(x)<\infty$ for some $x \in L^\infty(\Omega,X)$
implies that $I_{f^*}$ is a well-defined convex functional on
$L^1(\Omega,X^*)$ with values in $\R \cup \{ \infty \}$. Indeed, since $f_\omega^*$ and
  $f_\omega$ are conjugate to each other, we have for all $\omega$ and
  all $x^* \in L^1(\Omega,X^*)$ 
\begin{equation}
\label{eq:fomega-star-ineq}
f_{\omega}^*(x^*(\omega)) \geq \langle x^*(\omega),x(\omega)\rangle_{X^*,X} - f_\omega(x(\omega)).
\end{equation}
The right side is integrable by assumption, so $I_{f^*} > -\infty$ on
$L^1(\Omega,X^*)$. If one additionally has $I_{f^*}(x^*) < \infty$ for
some $x^* \in L^1(\Omega, X^*)$, then one shows in the same way that $I_f$
is well-defined on $L^\infty(\Omega,X)$ with values in $\R \cup \{ \infty \}.$
\end{remark}

The following result gives a bound on the singular element $v^\circ$. 

\begin{theorem}
\label{thm:bounds-support-function}
Let $f$ be a normal convex integrand. Let $\bar{x} \in L^\infty(\Omega,X)$
be such that there exists $r >0$ and an integrable function $k_r$ of
$\omega$ satisfying $f_\omega(x(\omega)) \leq k_r(\omega)$ as long as $\lVert x- \bx\rVert_{L^\infty(\Omega,X)} < r$.
Then the conjugate integrand $f_\omega^*(x^*(\omega))$ is majorized by an
integrable function of $\omega$ for at least one $x^* \in L^1(\Omega, X^*)$.
Additionally, $I_f$ is continuous at $x$ as long as
$\lVert x-\bar{x} \rVert_{L^{\infty}(\Omega,X)} < r$; in this case,  
the function $\sigma({\textup{dom} (I_f)},\cdot)$ given
in~\eqref{eq:representation-dual-integral} can be bounded as follows:
\begin{equation}
\label{eq:bound-support-function}
\sigma(\textup{dom} (I_f), v^\circ) \geq v^\circ(\bar{x}) + r \rVert v^\circ \rVert_{(L^\infty(\Omega,X))^*}.
\end{equation}
\end{theorem}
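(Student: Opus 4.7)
The plan is to reduce all three assertions to a single underlying fact: that $I_f$ is continuous on the open $L^\infty$-ball of radius $r$ about $\bar{x}$. Continuity at once delivers the support-function estimate, and then a subgradient of $I_f$ at $\bar{x}$, combined with the Ioffe--Levin decomposition and the representation in \cref{thm:conjugate-Linfty-functional}, supplies the desired $x^* \in L^1(\Omega,X^*)$.

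First I would prove continuity. The hypothesis gives $I_f(x) \leq \int_\Omega k_r(\omega)\,\D\pP(\omega) < \infty$ for every $x$ with $\lVert x - \bar{x}\rVert_{L^\infty(\Omega,X)} < r$, so $I_f$ is a convex functional that is bounded above by a finite constant on that open ball. The classical principle that a proper convex function on a Banach space bounded above on a nonempty open subset is continuous (in fact locally Lipschitz) there then applies, and $I_f$ is continuous and finite-valued on the ball. In particular, the ball is contained in $\textup{dom}(I_f)$, and consequently
\begin{equation*}
\sigma(\textup{dom}(I_f), v^\circ) \;\geq\; \sup_{\lVert z \rVert_{L^\infty(\Omega,X)} < r} v^\circ(\bar{x}+z) \;=\; v^\circ(\bar{x}) + r\,\lVert v^\circ \rVert_{(L^\infty(\Omega,X))^*},
\end{equation*}
which is exactly \eqref{eq:bound-support-function}.

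Second I would extract the $L^1$-selection. Continuity of $I_f$ at $\bar{x}$ makes $\partial I_f(\bar{x}) \subset (L^\infty(\Omega,X))^*$ nonempty; pick any $v^* \in \partial I_f(\bar{x})$ and apply the Ioffe--Levin decomposition to write $v^* = v + v^\circ$, with $v$ corresponding to some $x^* \in L^1(\Omega,X^*)$. The Fenchel equality \eqref{eq:conjugate-subdifferential} yields $I_f^*(v^*) = v^*(\bar{x}) - I_f(\bar{x}) \in \R$, and substituting \cref{thm:conjugate-Linfty-functional} together with the support-function bound just proved gives $I_{f^*}(x^*) < \infty$. Since $f_\omega^*$ is a proper convex integrand, $f_\omega^*(x^*(\omega)) > -\infty$ a.s., so finiteness of the integral implies that the positive part of $\omega \mapsto f_\omega^*(x^*(\omega))$ is an integrable majorant, as claimed.

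The only delicate point is justifying ``bounded above implies continuous'' in Step~1 rigorously, because that principle requires ruling out the pathological case $I_f \equiv -\infty$ on the ball. I expect to handle this by a measurable-selection argument: a subgradient of $f_\omega$ at $\bar{x}(\omega)$ exists for each $\omega$ by virtue of the local upper bound, and the selection theorem for measurable multifunctions delivers a measurable choice whose norm is controlled (via a standard convex-analysis estimate) by $(k_r(\omega) - f_\omega(\bar{x}(\omega)))/r$; this provides an integrable affine minorant for $I_f$ and rules out $-\infty$-values. Once continuity is in place, the remaining steps are essentially bookkeeping around the duality identity \eqref{eq:representation-dual-integral}.
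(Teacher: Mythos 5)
Your proposal is correct, but it inverts the logical structure of the paper's argument in a way worth flagging. What you defer to the end as ``the only delicate point'' --- constructing an integrable measurable selection $x^*(\omega) \in \partial f_\omega(\bar{x}(\omega))$ to rule out $I_f = -\infty$ --- is in fact the entire substance of the paper's proof. The paper establishes measurability of the multifunction $\omega \mapsto \partial f_\omega(\bar{x}(\omega))$ by showing its support function equals the directional derivative $f'_\omega(\bar x(\omega);\cdot) = \inf_{t>0} t^{-1}(f_\omega(\bar x(\omega)+t\,\cdot)-f_\omega(\bar x(\omega)))$, hence is a limit of measurable functions; it then invokes the measurable selection theorem and bounds $r\lVert x^*(\omega)\rVert_{X^*} \leq k_r(\omega) - f_\omega(\bar{x}(\omega))$ to get $x^* \in L^1(\Omega,X^*)$. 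Once that selection exists, the first assertion of the theorem is immediate from the pointwise Fenchel equality $f_\omega^*(x^*(\omega)) = \langle x^*(\omega),\bar{x}(\omega)\rangle_{X^*,X} - f_\omega(\bar{x}(\omega))$, so your detour through $\partial I_f(\bar{x})$, the Ioffe--Levin decomposition, and \cref{thm:conjugate-Linfty-functional} is valid but redundant: it uses heavier machinery to reprove what the selection already gives, and it cannot be run \emph{before} the selection argument anyway, since the finiteness of $I_f(\bar{x})$ that you need for $I_f^*(v^*)\in\R$ is exactly what the selection's integrable affine minorant provides. Two smaller points: when you spell out the measurable-selection step, the measurability of $\omega\mapsto\partial f_\omega(\bar x(\omega))$ is the part that actually requires an argument (the support-function device above), not merely a citation of the selection theorem; and integrability of the positive part of $f_\omega^*(x^*(\cdot))$ alone does not give a majorant in the paper's sense ($|f_\omega^*(x^*(\omega))|\le g(\omega)$) --- you also need the integrable lower bound $\langle x^*(\omega),\bar x(\omega)\rangle_{X^*,X} - k_r(\omega)$ from \eqref{eq:fomega-star-ineq}. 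Your derivation of \eqref{eq:bound-support-function} and of continuity on the ball matches the paper's.
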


\begin{proof}
We proceed as in~\cite[Theorem 2]{Rockafellar1971a}, making modifications
for the infinite-dimensional setting.
Using~\eqref{eq:conjugate-subdifferential}, we have
\begin{equation*}
  \partial f_{\omega}(\bx(\omega))= \{ q \in X^* \colon \langle q,\bx(\omega)\rangle_{X^*,X} = f_{\omega}(\bx(\omega)) + f^*_\omega(q) \}.
\end{equation*}
We show that the set-valued map
$\omega \mapsto \partial f_{\omega}(\bx(\omega))$ is measurable by first
proving that the support function of $\partial f_{\omega}(\bx(\omega))$ is
measurable.  Since $f_\omega$ is convex and finite on a neighborhood of
$\bx(\omega)$, it is continuous at $\bx(\omega)$, so the set
$\partial f_{\omega}(\bx(\omega))$ is a nonempty, convex, and weakly*
compact subset of $X^*$ and $f_\omega$ is Hadamard directionally
differentiable in $\bx(\omega)$~\cite[Proposition 2.126]{Bonnans2013}.
Since $X$ is reflexive, the support function of
$\partial f_{\omega}(\bx(\omega))$ in $x$ is given by 
\begin{equation*}
  \sigma(\partial f_{\omega}(\bx(\omega)),x) = \sup_{q \in \partial f_{\omega}(\bx(\omega))} \langle x,q \rangle_{X,X^*}.
\end{equation*}
Thus, since $f_\omega$ is convex, we have
\begin{equation}
\label{eq:bound-support-function-by-function-values}
\begin{aligned}
\sigma(\partial f_{\omega}(\bx(\omega)),x) &= f'_\omega(\bx(\omega); x) 
 = \inf_{t \geq 0}  \frac{1}{t} \Big( f_{\omega}(\bx(\omega) + tx) - f_\omega(\bx(\omega)) \Big)\\
&\leq f_{\omega}(\bx(\omega) + x) - f_\omega(\bx(\omega)).
\end{aligned}
\end{equation}
Measurability of $\omega \mapsto \sigma(\partial f_{\omega}(\bx(\omega)),x)$
follows from the fact that the limit of a sequence of measurable functions
is measurable~\cite[p.~307]{Aubin1990}. Since $X$ is reflexive and
separable, we obtain from~\cite[Theorem 8.2.14]{Aubin1990} that
$\omega \mapsto \partial f_{\omega}(\bx(\omega))$ is measurable. The
measurable selection theorem~\cite[Theorem 8.1.3]{Aubin1990} guarantees
the existence of a measurable function $x^*:\Omega \rightarrow X^*$ such
that $x^*(\omega) \in \partial f_{\omega}(\bx(\omega))$ for every
$\omega \in \Omega.$
From~\eqref{eq:bound-support-function-by-function-values} it follows for
this $x^*$ that
\begin{equation*}
  \langle x, x^*(\omega) \rangle_{X,X^*} \leq \sigma(\partial f_{\omega}(\bx(\omega)),x)\leq f_{\omega}(\bx(\omega) + x) - f_\omega(\bx(\omega)).
\end{equation*}
As long as $x \in X$ satisfies $\lVert x \rVert_{X} < r$, we obtain by
assumption that
\begin{equation}
\label{eq:majorant-for-measurable-selection}
r \lVert x^*(\omega)\rVert_{X^*} =\sup_{x:\lVert x \rVert_{X}\leq r} \langle x, x^*(\omega) \rangle_{X,X^*}  \leq k_r(\omega) - f_\omega(\bx(\omega)).
\end{equation}
The right-hand side of~\eqref{eq:majorant-for-measurable-selection} is
integrable, thus $x^*\in L^1(\Omega, X^*).$

Now, by \eqref{eq:conjugate-subdifferential}, we have for this $x^* \in L^1(\Omega,X^*)$
$$f_\omega^*(x^*(\omega)) = \langle x^*(\omega), \bar{x}(\omega) \rangle_{X^*,X} - f_\omega(\bar{x}(\omega)),$$
from which we immediately obtain that $f^*_\omega(x^*(\omega))$ is majorizable.

For any $x\in L^\infty(\Omega,X)$ with
$\lVert x-\bx \rVert_{L^\infty(\Omega, X)} < r$, we get $$I_f(x) \leq
\int_{\Omega} k_r(\omega) \D \pP(\omega) < \infty,$$ implying $I_f(x)$
is bounded above and continuous at $x$, i.e., $x \in \textup{dom} (I_f)$, so 
\begin{align*}
  \sigma(\textup{dom} (I_f), v^\circ)
  &= \sup_{x \in \textup{dom} (I_f)} v^\circ(x) \geq \sup_{x: \lVert x
    - \bx \rVert_{L^\infty(\Omega, X)} < r}\!\! v^\circ(x) =  v^\circ(\bx) + r \lVert v^\circ \rVert_{(L^\infty(\Omega,X))^*}.
\end{align*}
This is the expression~\eqref{eq:bound-support-function}, so the proof is
complete.
\end{proof}

The next two results can be obtained as
in~\cite[Corollary 2A, 2C]{Rockafellar1971a}.

\begin{corollary}
\label{cor:duality-conjugate-functions}
Assume $f$ is a normal convex integrand and $f(x(\omega),\omega)$ is an
integrable
function of $\omega$ for every $x \in L^\infty(\Omega,X)$.
Then $I_f$ and $I_{f^*}$ are
well-defined convex functionals on $L^\infty(\Omega,X)$ and $L^1(\Omega, X^*)$,
respectively, that are conjugate to each other in the sense that
\begin{align*}
I_{f^*}(x^*) &= \sup_{x\in L^\infty(\Omega,X)} \left\lbrace \int_{\Omega} \langle x^*(\omega),x(\omega)\rangle_{X^*,X} \D \pP(\omega) - I_f(x)\right\rbrace,\\
I_{f}(x) &= \sup_{x^* \in L^1(\Omega,X^*)} \left\lbrace \int_{\Omega} \langle x^*(\omega),x(\omega)\rangle_{X^*,X}  \D \pP(\omega) - I_{f^*}(x^*)\right\rbrace.
\end{align*}
Furthermore, if $v^*$ is an absolutely continuous functional corresponding
to a function $x^* \in L^1(\Omega, X^*)$, then $I_f^*(v^*) = I_{f^*}(x^*)$,
while $I_f^*(v^*) = \infty$ for any $v^*$ that is not absolutely continuous.
\end{corollary}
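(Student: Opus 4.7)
The plan is to obtain both claims from \cref{thm:conjugate-Linfty-functional} together with a Fenchel--Moreau argument, after observing that the hypothesis forces $\textup{dom}(I_f) = L^\infty(\Omega, X)$. For well-definedness, the hypothesis gives $I_f(x) \in \R$ for every $x \in L^\infty(\Omega, X)$, and \cref{rem:integral-well-defined} then guarantees that $I_{f^*}$ is an $\R \cup \{\infty\}$-valued convex functional on $L^1(\Omega, X^*)$; convexity of both integral functionals passes from the pointwise convexity of $f_\omega$ and $f_\omega^*$.

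For the representation of $I_f^*$, I would apply \cref{thm:conjugate-Linfty-functional} to an arbitrary $v^* = v + v^\circ \in (L^\infty(\Omega, X))^*$, where the absolutely continuous part $v$ corresponds to some $x^* \in L^1(\Omega, X^*)$. Because $\textup{dom}(I_f) = L^\infty(\Omega, X)$ is itself a linear space, the support function collapses: $\sigma(L^\infty(\Omega, X), v^\circ) = \sup_{x \in L^\infty(\Omega, X)} v^\circ(x)$ equals $0$ when $v^\circ = 0$ and $+\infty$ otherwise, since a nonzero linear functional on a vector space is unbounded above. \cref{thm:conjugate-Linfty-functional} then yields $I_f^*(v^*) = I_{f^*}(x^*)$ for absolutely continuous $v^*$ and $I_f^*(v^*) = +\infty$ whenever the singular part is nonzero, which is the final assertion of the corollary. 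The first conjugate duality formula is immediate upon unfolding the definition of $I_f^*$ at the absolutely continuous functional corresponding to a given $x^* \in L^1(\Omega, X^*)$. For the second formula, I would apply the Fenchel--Moreau theorem in the $(L^\infty(\Omega, X), (L^\infty(\Omega, X))^*)$-duality: since $I_f$ is convex, proper, and norm lower semicontinuous, $I_f = (I_f^*)^*$, and the defining supremum reduces from $(L^\infty(\Omega, X))^*$ to the absolutely continuous part, isometrically identified with $L^1(\Omega, X^*)$.

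The main obstacle I anticipate is two-fold and interlocked: producing an $x^* \in L^1(\Omega, X^*)$ for which $\omega \mapsto f_\omega^*(x^*(\omega))$ is integrable, so that \cref{thm:conjugate-Linfty-functional} is applied non-trivially, and establishing the norm lower semicontinuity of $I_f$ on $L^\infty(\Omega, X)$ required by Fenchel--Moreau. Both are furnished by \cref{thm:bounds-support-function} once its majorant hypothesis is verified: the $x^*$ it produces simultaneously gives $I_{f^*}(x^*) < \infty$ and supplies the integrable affine minorant $\langle x^*(\omega), x_n(\omega) \rangle_{X^*,X} - f_\omega^*(x^*(\omega))$ for $f_\omega(x_n(\omega))$ along any norm-convergent sequence $x_n \to x$ in $L^\infty(\Omega, X)$, which permits Fatou's lemma to upgrade the pointwise lower semicontinuity of $f_\omega$ to $\liminf I_f(x_n) \geq I_f(x)$. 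The verification of the majorant hypothesis is the delicate point: in the scalar case of \cite{Rockafellar1971a} one bounds $f_\omega$ on an $L^\infty$-ball around $\bar{x}$ by the pointwise maximum over the $2^n$ vertices of a hypercube, each in $L^\infty$ and hence yielding an integrable majorant by hypothesis, whereas in the reflexive separable Banach setting this polytopal argument breaks down and must be replaced, for instance by exploiting the norm continuity of a convex functional finite on all of a Banach space together with a measurable subgradient selection in the spirit of the proof of \cref{thm:bounds-support-function} itself.
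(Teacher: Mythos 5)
Your proof follows essentially the same route as the paper's: feed \cref{thm:conjugate-Linfty-functional} with the observation that the hypothesis makes $\textup{dom}(I_f)$ all of $L^\infty(\Omega,X)$, conclude that the singular contribution $\sigma(\textup{dom}(I_f),v^\circ)$ is $+\infty$ unless $v^\circ=0$, and then read off the two conjugacy formulas, the second one by biconjugation restricted to the absolutely continuous part of $(L^\infty(\Omega,X))^*$. Your way of killing the singular part (the support function of the whole space is $0$ or $+\infty$ according to whether $v^\circ$ vanishes, since a nonzero linear functional is unbounded above on a linear space) is a touch more direct than the paper's, which applies \cref{thm:bounds-support-function} at $\bar x=0$ and sends $r\to\infty$ in \eqref{eq:bound-support-function}; both are fine, and nothing is saved globally because the integrable majorant is still needed for the lower semicontinuity of $I_f$ that your Fenchel--Moreau step requires.

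The one place your argument is not closed is exactly the point you flag: producing $r>0$ and an integrable $k_r$ with $f_\omega(\xi)\le k_r(\omega)$ a.s.\ for all $\lVert \xi\rVert_X\le r$. The paper does not improvise here; it quotes \cite[Theorem~5.1]{Levin1975}, which asserts precisely this uniform integrable majorant for normal convex integrands on separable Banach spaces under the hypothesis that $I_f$ is finite on $L^\infty(\Omega,X)$. Your proposed substitute does not deliver it: a measurable subgradient selection yields an integrable affine \emph{minorant} of $f_\omega$ via \eqref{eq:fomega-star-ineq} (the right tool for lower semicontinuity and for showing $I_{f^*}\not\equiv\infty$, but the wrong side of the inequality for the majorant), while ``norm continuity of a convex functional finite everywhere'' only gives, for each fixed $\omega$, a local bound whose radius and size depend on $\omega$ with no control on integrability --- and the hypothesis does not even guarantee that $f_\omega$ is finite on all of $X$ for a.e.\ $\omega$, only on countable dense families of values. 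This majorant statement is a genuine uniform-boundedness-type result, not a pointwise convexity fact; cite Levin's theorem for it, as the paper does, and your proof is complete.
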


\begin{proof}
Since $f(x(\omega),\omega)$ is integrable for all $x$, it is also integrable for $x \equiv 0$.
Now, by \cite[Theorem 5.1]{Levin1975}, this implies the existence of a $r>0$ and integrable function $k_r$ such that $f_\omega(0+ x)\leq k_r(\omega) $ a.s.~for all $x \in X$ such that $\lVert x \rVert_{X} \leq r.$
\cref{thm:bounds-support-function} gives the bound \eqref{eq:bound-support-function}, which in combination with \eqref{eq:representation-dual-integral} gives the conclusion with $r=\infty$.
\end{proof}

\begin{corollary}
\label{cor:characterization-subdifferentials}
Let $f$ and $\bx$ satisfy the assumptions of
\cref{thm:bounds-support-function}. Then $v^* \in (L^\infty(\Omega, X))^*$
is an element of $\partial I_f(\bx)$ if and only if
\begin{equation}
\label{eq:subdifferential-a.s.}
x^*(\omega) \in \partial f_\omega(\bx(\omega)) \quad \text{a.s.},
\end{equation}
where $x^* \in L^1(\Omega,X^*)$ corresponds to the absolutely
continuous part $v$ of $v^*$ and the singular part $v^\circ$ of $v^*$ satisfies
$\sigma(\textup{dom} (I_f), v^\circ) = v^\circ(\bx).$ Moreover,
$\partial I_f(\bx)$ can be identified with a nonempty, weakly compact
subset of $L^1(\Omega, X^*)$. In particular, $v^*$ belongs to
$\partial I_f(\bx)$ if and only if $v^\circ \equiv 0$ and $v=x^*$
satisfies~\eqref{eq:subdifferential-a.s.}.
\end{corollary}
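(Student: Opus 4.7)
The natural route is the Fenchel--Young equality characterisation of the subdifferential:
\[
v^* \in \partial I_f(\bx) \iff I_f(\bx) + I_f^*(v^*) = v^*(\bx).
\]
I would then insert the Ioffe--Levin decomposition $v^* = v + v^\circ$, where the absolutely continuous part $v$ is represented by some $x^* \in L^1(\Omega, X^*)$, together with the representation
\[
I_f^*(v^*) = I_{f^*}(x^*) + \sigma(\textup{dom}(I_f), v^\circ)
\]
supplied by \cref{thm:conjugate-Linfty-functional}, so that the equation above splits cleanly into an absolutely continuous piece and a singular piece. Note that the hypotheses of \cref{thm:bounds-support-function} already guarantee that $\bx \in \textup{dom}(I_f)$ (in fact $I_f$ is continuous at $\bx$) and that $I_{f^*}$ is a well-defined $\mathbb{R}\cup\{\infty\}$-valued functional on $L^1(\Omega,X^*)$ via \cref{rem:integral-well-defined}.

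\textbf{Main steps.} The heart of the argument is to record two pointwise bounds. First, $\omega$-wise Fenchel--Young for the conjugate pair $(f_\omega, f_\omega^*)$ gives
\[
\langle x^*(\omega), \bx(\omega)\rangle_{X^*,X} \le f_\omega(\bx(\omega)) + f_\omega^*(x^*(\omega)),
\]
and after integration (both sides are integrable thanks to the majorant $k_r$ from \cref{thm:bounds-support-function})
\[
v(\bx) \le I_f(\bx) + I_{f^*}(x^*),
\]
with equality a.s.\ if and only if $x^*(\omega) \in \partial f_\omega(\bx(\omega))$ a.s., by~\eqref{eq:conjugate-subdifferential}. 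Second, since $\bx \in \textup{dom}(I_f)$,
\[
v^\circ(\bx) \le \sigma(\textup{dom}(I_f), v^\circ).
\]
Adding the two inequalities gives $v^*(\bx) \le I_f(\bx) + I_f^*(v^*)$, and the Fenchel--Young equality forces both bounds to be equalities. This produces precisely the pointwise subgradient condition~\eqref{eq:subdifferential-a.s.} and the support-function identity, proving the first equivalence. For the ``in particular'' statement, I would invoke the sharper bound~\eqref{eq:bound-support-function}: combining $\sigma(\textup{dom}(I_f),v^\circ) \ge v^\circ(\bx) + r\lVert v^\circ\rVert$ with the equality just obtained yields $\lVert v^\circ\rVert = 0$.

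\textbf{Nonemptiness, weak compactness, and the main obstacle.} Nonemptiness of $\partial I_f(\bx)$ is inherited from the measurable-selection construction performed inside the proof of \cref{thm:bounds-support-function}, which delivers an $x^* \in L^1(\Omega, X^*)$ with $x^*(\omega) \in \partial f_\omega(\bx(\omega))$ for every $\omega$. Since $I_f$ is continuous at $\bx$, the subdifferential is bounded in $(L^\infty(\Omega,X))^*$, and being convex and weak* closed it is weak* compact by Banach--Alaoglu. By the characterisation just established it sits inside the absolutely continuous subspace isometric to $L^1(\Omega, X^*)$, and the ambient weak* topology restricts to the $\sigma(L^1(\Omega,X^*), L^\infty(\Omega,X))$-topology on this subspace, giving the asserted weak compactness. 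I expect the most delicate point to be ensuring that no singular mass is lost in the passage between the two representations: the dual $(L^\infty(\Omega,X))^*$ is strictly larger than $L^1(\Omega, X^*)$ in general, so the identification hinges precisely on the bound~\eqref{eq:bound-support-function}, which is what rules out singular contributions and lets the subdifferential be captured entirely within $L^1(\Omega,X^*)$.
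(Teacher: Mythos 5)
Your proposal is correct and follows essentially the same route as the paper: the Fenchel--Young equality for $I_f$, the Ioffe--Levin decomposition combined with Levin's representation of $I_f^*$ from \cref{thm:conjugate-Linfty-functional} to split that equality into an absolutely continuous and a singular part, the bound~\eqref{eq:bound-support-function} to kill the singular part, and the isometry $(L^1(\Omega,X^*))^* \simeq L^\infty(\Omega,X)$ to convert weak* compactness into weak compactness in $L^1(\Omega,X^*)$. The only cosmetic difference is ordering: the paper first derives $v^\circ(\bx)\ge\sigma(\textup{dom}(I_f),v^\circ)$ and concludes $v^\circ\equiv 0$ before extracting the pointwise condition from the vanishing non-negative integrand, whereas you keep the two inequalities symmetric and force both to be equalities at once.
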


\begin{proof}
By \cref{thm:bounds-support-function}, $I_f$ is finite on a
neighborhood of $\bx$ and is continuous at $\bx$; it is naturally convex
by convexity of $f$. In particular $\partial I_f(\bar{x})$ is a nonempty,
weakly* compact subset of $(L^\infty(\Omega,X))^*$.

Using~\eqref{eq:decomposition-dual-space}, notice that
by~\eqref{eq:conjugate-subdifferential} $v^* \in \partial I_f(\bx)$
if and only if 
\begin{equation*}
\begin{aligned}
0 &=I_f^*(v^*) +  I_f(\bx) - v^*(\bx)\\
&= \sup_{z \in L^\infty(\Omega,X)} \{ v^\circ(z) +v(z) - I_f(z) \} +  I_f(\bx) - v^\circ(\bx) - v(\bx),
\end{aligned}
\end{equation*}
i.e., the supremum is attained in $z = \bx$.
Now, by \cref{thm:conjugate-Linfty-functional}
and~\eqref{eq:fomega-star-ineq} one has
\begin{equation*}
  \begin{aligned}
    v^\circ(\bx) +v(\bx) - I_f(\bx) &= I_f^*(v^*)= I_{f^*}(x^*) + \sigma({\textup{dom} (I_f)},v^\circ)\\
    &\ge v(\bx) - I_f(\bx) + \sigma({\textup{dom} (I_f)},v^\circ)
  \end{aligned}
\end{equation*}
and thus $v^\circ(\bx) \ge \sigma(\textup{dom}(I_f),v^\circ)$.
By~\eqref{eq:bound-support-function}, this can be the case
if and only if $v^\circ \equiv 0$.
Thus using~\eqref{eq:representation-dual-integral}, we have that
\begin{align}
0 &=I_f^*(v^*) +  I_f(\bx) - \sigma(\textup{dom} (I_f),v^\circ) -  \int_\Omega \langle x^*(\omega), \bx(\omega) \rangle_{X^*,X} \D \pP(\omega) \nonumber \\
&=I_{f^*}(x^*) + I_f(\bx)- \int_\Omega \langle x^*(\omega), \bx(\omega) \rangle_{X^*,X} \D \pP(\omega).\nonumber\\
 &= \int_{\Omega} f^*_{\omega}(x^*(\omega)) + f_\omega(\bx({\omega})) -  \langle x^*(\omega), \bx(\omega) \rangle_{X^*,X} \D \pP(\omega).\label{eq:integral-equal-zero}
\end{align}

Notice that the integrand in~\eqref{eq:integral-equal-zero} is non-negative
by definition of the conjugate $f_\omega^*$, i.e.,~\eqref{eq:fomega-star-ineq}.
We obtain that the
integrand~\eqref{eq:integral-equal-zero} is almost surely equal to zero
and, recalling the equivalent expression for the
subdifferential~\eqref{eq:conjugate-subdifferential},~\eqref{eq:subdifferential-a.s.} follows.  

For the second claim, since $X$ is reflexive and separable, we have the
isometric isomorphism~\cite[Corollary 1.3.22]{Hytoenen2016}
\begin{equation}
\label{eq:isometric-isomorphism}
(L^1(\Omega,X^*))^* \simeq L^\infty(\Omega,X^{**}) = L^\infty(\Omega,X).
\end{equation}
Since all elements of the subdifferential in fact belong to
$L^1(\Omega, X^*)$, $\partial I_f(\bar{x})$ can be identified with a
subset of $L^1(\Omega,X^*)$. The fact that this subset is weakly compact in $L^1(\Omega,X^*)$ 
follows from~\eqref{eq:isometric-isomorphism} and the fact that
$\partial I_f(\bar{x})$ is weakly* compact in $(L^\infty(\Omega,X))*$.
\end{proof}

\section{Lagrangian Duality and Optimality Conditions}
\label{sec:problem-formulation}
In everything that follows, we will consider the case where the admissible
set of states from~\eqref{eq:model-problem-abstract} contains both an
equality and inequality (cone) constraint. Let $W$ and $R$ be real,
reflexive, and separable Banach spaces. The equality and inequality
constraint are defined by the mappings
$e:\U\times \Y\times \Omega \rightarrow W$ and $i:\U \times \Y \times \Omega \rightarrow R$,
respectively. Given a cone $K \subset R$, the partial order $\leq_K$ is
defined by $r \leq_K 0  :\Leftrightarrow -r \in K$,
or equivalently, $r \geq_K 0$ if and only if $r \in K$. 
The corresponding dual cone is denoted by
$K^\oplus := \{  r^* \in R^*: \langle r^*,r \rangle_{R^*,R} \geq 0 \,\forall r \in K \}.$
The admissible set takes the form
\begin{equation*}
  \Yad(\u,\omega):=\{ \y \in C_2: e(\u,\y,\omega) = 0, i(\u,\y,\omega) \leq_K 0\}.
\end{equation*}
Additionally, we assume that the integrand takes the form
\begin{equation}
\label{eq:multistage-integrand}
J(\u,\y):=J_1(\u) + J_2(\u,\y).
\end{equation}
The problem introduced in~\eqref{eq:model-problem-abstract} is now defined
over $x:=(x_1,x_2) \in X:=\U \times L^\infty(\Omega,\Y)$ by
\begin{equation}
 \label{eq:model-problem-abstract-long}\tag{$\textup{P}$}
 \begin{aligned}
 \min_{x \in X} \quad &\{j(x):=J_1(\u) +\E[J_2(\u, \y(\cdot))]\}\\
 &\text{s.t.} \quad\left\{\begin{aligned}
     x_1 &\in C_1,\\
     \y(\omega) &\in C_2 \text{ a.s.},\\
     e(\u,\y(\omega),\omega) &= 0 \text{ a.s.},\\
     i(\u, \y(\omega), \omega) &\leq_K 0 \text{ a.s.}
   \end{aligned}\right.
 \end{aligned}
\end{equation}
We make the following assumptions about Problem~\eqref{eq:model-problem-abstract-long}.

\begin{assumption}
\label{assumption:general-problem}
Let $C_1 \subset \U$ and $C_2 \subset \Y$ be nonempty, closed, and convex
sets and let $K \subset R$ be a nonempty, closed, and convex cone. 
Assume that the integrand $(\u,\y) \mapsto J(\u,\y)$ is convex on $\U\times \Y$
and is everywhere defined and finite.
Moreover, assume that for every $r > 0$, there exist 
$a_r >0$ such that for any $\lVert x_1\rVert_{X_1} + \lVert x_2\rVert_{X_2} \leq r$, it holds that
\[
|J_2(\u,\y)| \le a_r.
\]
Assume $e(\u,\y,\omega)$ is continuous and linear in $(\u,\y)$ and $i(\u,\y,\omega)$ is continuous and $K$-convex\footnote{$K$-convexity of $i(\cdot,\cdot,\omega)$ means 
\begin{equation*}
i(\lambda x_1 + (1-\lambda)\hat{x}_1, \lambda x_2 + (1-\lambda)\hat{x}_2,\omega) \leq_K \lambda i(x_1,x_2,\omega) + (1-\lambda)i(\hat{x}_1, \hat{x}_2,\omega)
\end{equation*}
for all $(x_1,x_2),(\hat{x}_1,\hat{x}_2) \in X_1 \times X_2$,
  and $\lambda \in (0,1)$.
} in $(\u,\y)$;  $e(\u,\y,\omega)$ and
$i(\u,\y,\omega)$ are measurable and for every $r>0$ there exist 
$b_{r,e} > 0$ and $b_{r,i}>0$ such that for any $\lVert x_1\rVert_{X_1} + \lVert x_2\rVert_{X_2} \leq r$, it holds 
\begin{equation*}
\label{eq:growth-condition-constraints}
  \|e(x_1,x_2,\omega)\|_{W} \le b_{r,e}, \quad \|i(x_1,x_2,\omega)\|_{R} \le b_{r,i} \quad \text{a.s.}
\end{equation*}

\end{assumption}

\begin{remark}
By  \cref{assumption:general-problem}, the mappings $J_2$, $e$,
and $i$ are Carath\'eodory and thus for measurable $x_1: \Omega \rightarrow X_1$ 
and $x_2:\Omega \rightarrow X_2$, the mappings
\[
\omega \mapsto J_2(\u(\omega),\y(\omega)),\quad \omega \mapsto e(\u(\omega),\y(\omega),\omega),\quad \omega \mapsto i(\u(\omega),\y(\omega),\omega) 
\]
are measurable, see~\cite[Corollary~8.2.3]{Aubin1990}.
The respective growth conditions assert that if additionally $x_1: \Omega \rightarrow X_1$ and $x_2:\Omega \rightarrow X_2$ are essentially bounded, we have
\begin{gather*}
J_2(\u(\cdot),\y(\cdot)) \in L^\infty(\Omega), \\ e(\u(\cdot),\y(\cdot),\cdot) \in L^\infty(\Omega,W), \quad i(\u(\cdot),\y(\cdot),\cdot) \in L^\infty(\Omega,R).
\end{gather*}
For more on growth conditions, see, e.g.,~\cite[Section 3.7]{AppellZabrejko:1990}.
\end{remark}

To obtain optimality conditions, it is natural to define the Lagrangian
\begin{align*}
\mathbb{L}(x,\tilde{\lambda}) = j(x) &+\langle \tilde{\lambda}_e, e(\u,\y(\cdot),\cdot)\rangle_{(L^\infty(\Omega,W))^*,L^\infty(\Omega,W)} \\
& + \langle \tilde{\lambda}_i, i(\u, \y(\cdot),\cdot)\rangle_{(L^\infty(\Omega,R))^*,L^\infty(\Omega,R)}.
\end{align*}
However, $\tilde{\lambda}_e$ and $\tilde{\lambda}_i$ do not have natural
representations in their corresponding dual spaces. We will show that under
certain conditions, Lagrange multipliers can be found in the space
$L^1(\Omega,W^*)$ for the equality constraint and $L^1(\Omega,R^*)$ for the
inequality constraint. To this end, we will show when saddle points of a
(generalized) Lagrangian exist in
\cref{sec:existence-saddle-points}.
This will allow us to formulate Karush--Kuhn--Tucker (KKT) conditions for
Problem~\eqref{eq:model-problem-abstract-long} in
\cref{subsec:KKT}.

\subsection{The Generalized Lagrangian and Existence of Saddle Points}
\label{sec:existence-saddle-points}
In this section, we define a generalized Lagrangian and discuss the
existence of saddle points for Problem~\eqref{eq:model-problem-abstract-long}.
We will use the perturbation approach, meaning that we first introduce the
perturbed problem
\begin{equation}
\label{eq:model-PDE-UQ-state-constraints-perturbed}
\tag{$\textup{P$^{u}$}$}
 \begin{aligned}
  \min_{x \in X} \quad &\varphi(x,u) \\
  &\text{s.t. } \left\{\begin{aligned}
  \u &\in C_1,\\
  \y(\omega) &\in C_2 \text{ a.s.},\\ e(\u,\y(\omega),\omega) &= u_e(\omega) \text{ a.s.}, \\
  i(\u, \y(\omega), \omega) &\leq_K u_i(\omega) \text{ a.s.}
  \end{aligned}\right.
   \end{aligned}
\end{equation}
where $\varphi(x,u) = j(x)$ if all constraints
of~\eqref{eq:model-PDE-UQ-state-constraints-perturbed} are fulfilled, and
$\varphi(x,u) = \infty$ otherwise. 
We define the space of perturbations by
\begin{equation*}
  U := L^\infty(\Omega,W) \times L^\infty(\Omega,R)
\end{equation*}
and the space of Lagrange multipliers by
\begin{equation*}
  \Z:=L^1(\Omega,W^*) \times L^1(\Omega,R^*).
\end{equation*}  
These spaces can be paired for $u=(u_e,u_i) \in U$ and
$\lambda = (\p,\lami) \in \Lambda$ with the bilinear form
\begin{equation}
\label{eq:dual-pairing}
\langle u,\z \rangle_{U,\Z}:=\int_{\Omega} \langle u_e(\omega), \p(\omega) \rangle_{W,W^*} + \langle u_{i}(\omega), \lami(\omega) \rangle_{R,R^*} \D \pP(\omega).
\end{equation}
The generalized Lagrangian on $X \times \Z$ is defined by
\begin{equation}
\label{eq:generalized-Lagrangian-definition}
L(x,\lambda):=\inf_{u \in U} \left\lbrace \langle u,\lambda \rangle_{U,\Z} + \varphi(x,u) \right\rbrace.
\end{equation}
Given the sets
\begin{align*}
X_0&:=\{ x=(\u,\y)\in X: \u \in C_1 \text{ and } \y(\omega) \in C_2 \text{ a.s.}\},\\
\Z_0 &:=\{ \lambda=(\p,\lami) \in \Z: \lami(\omega) \in K^\oplus \text{ a.s.}\},
\end{align*}
it is possible to show (see Appendix) that the Lagrangian takes the form
\begin{equation}
\label{eq:Lagrangian}
L(x,\z)= \begin{cases}
J_1(\u) +\E[\bar{J}_2(\u,\y(\cdot), \z(\cdot), \cdot)], & \text{ if } x \in X_0, 
\z \in \Z_0 \\
-\infty, & \text{ if } x \in X_0, \z \not\in \Z_0,\\
\phantom{-}\infty, & \text{ if } x \not\in X_0,
\end{cases}
\end{equation}
where $\bar{J}_2(\u,\y,\z,\omega) :=J_2(\u,\y)+\langle \p, e(\u,\y,\omega) \rangle_{W^*,W} +\langle \lami, i(\u,\y,\omega) \rangle_{R^*,R}.$
A saddle point of $L$ is by definition a point $(\bar{x},\bar{\z}) \in X \times \Z$ such that 
\begin{equation}
\label{eq:saddle-point-property}
L(\bar{x},\z) \leq L(\bar{x},\bar{\z}) \leq L(x,\bar{\z}) \quad \forall (x,\z) \in X\times \Z.
\end{equation}

Now, we define the dual problem
\begin{equation}\tag{$\textup{D}$}
\label{eq:dual-problem}
\max_{\z \in \Z} \left\lbrace g(\z) := \inf_{x \in X} L(x,\z) \right\rbrace.
\end{equation}
By basic duality, the question of the existence of saddle points is the
same as identifying those $(\bx,\bz)$ for which the minimum of
Problem~\eqref{eq:model-problem-abstract-long} and maximum of
Problem~\eqref{eq:dual-problem} is attained, i.e.,
\begin{equation*}
  \inf \textup{P} = \inf_{x \in X} \sup_{\z \in \Z} L(x,\z) = \sup_{\z \in \Z} \inf_{x \in X} L(x,\z) = \sup \textup{D}.
\end{equation*}
By the above definitions, it is clear that for all $x \in X_0$, $j(x) = \sup_{\z \in \Z} L(x,\z)$
and $\varphi(x,0) = j(x)$, from which we get
\begin{align*}
\varphi(x,u) & = \sup_{\z \in \Z_0} \{ L(x,\z) - \langle u,\z\rangle_{U,\Z}\}.
\end{align*}
It is straightforward to show that $L$ is convex in $x$ for given $\lambda \in \Lambda_0$ and concave in
$\z$ and that $\varphi$ is convex in $(x,u)$.
Moreover,
$\varphi \not\equiv \infty$. 
It will be convenient to define $X' = \U^* \times L^1(\Omega,\Y^*)$ and
the pairing
\begin{equation}
\label{eq:pairing-on-X}
\langle x,x'\rangle_{X,X'}=\langle \u, \u' \rangle_{\U,\U^*} + \int_{\Omega} \langle \y(\omega),\y'(\omega))\rangle_{\Y,\Y^*} \D \pP(\omega). 
\end{equation}
\begin{lemma}
\label{lemma:lsc-conjugate-function}
Let  \cref{assumption:general-problem} be satisfied. Then the
function $\varphi: X \times U \rightarrow \R \cup \{ \infty\}$ is
weak$^*$ lower semicontinuous.
\end{lemma}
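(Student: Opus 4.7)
My plan is to verify sequential weak$^*$ lower semicontinuity of $\varphi$, which suffices in this setting since the preduals of the $L^\infty$-factors are separable (as $X_2^*$, $W^*$, $R^*$ are separable) and the topology in question is metrizable on bounded sets. I take a sequence $((x^n,u^n))$ converging weak$^*$ to $(x,u)$ in $X\times U$. Passing to a subsequence, I may assume $\lim_n \varphi(x^n,u^n)=\liminf_n \varphi(x^n,u^n)$ is finite, so every $(x^n,u^n)$ lies in $\mathrm{dom}(\varphi)$ and hence satisfies all four constraints of \eqref{eq:model-PDE-UQ-state-constraints-perturbed}. I then decompose
\begin{equation*}
\varphi(x,u)=j(x)+\delta_{C_1}(x_1)+I_{\delta_{C_2}}(x_2)+I_{\delta_{\{0\}}}\!\bigl(e(x_1,x_2(\cdot),\cdot)-u_e\bigr)+I_{\delta_{-K}}\!\bigl(i(x_1,x_2(\cdot),\cdot)-u_i\bigr),
\end{equation*}
and verify that each summand is weak$^*$ lsc.

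For $j$, the part $J_1$ is convex and continuous on the reflexive space $X_1$ (where weak$^*$ coincides with weak), so it is weak$^*$ lsc. The stochastic part $I_{J_2}$ is the integral of the normal convex integrand $((x_1,x_2),\omega)\mapsto J_2(x_1,x_2)$; the growth bound $|J_2(x_1,x_2)|\le a_r$ together with \cref{thm:bounds-support-function} makes it finite and continuous on all of $X$, and \cref{cor:duality-conjugate-functions} expresses it as the pointwise supremum of weak$^*$-continuous affine functionals indexed by $L^1(\Omega,X_1^*\times X_2^*)$, giving weak$^*$ lower semicontinuity. The constraint $\delta_{C_1}(x_1)$ is weak$^*$ lsc because $C_1$ is convex and closed in the reflexive $X_1$. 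For the equality constraint, linearity of $e(\cdot,\cdot,\omega)$ together with the growth bound $\|e(x_1,x_2,\omega)\|_W\le b_{r,e}$ yields a uniform operator-norm bound on $e(\cdot,\cdot,\omega)$, so the map $(x_1,x_2,u_e)\mapsto e(x_1,x_2(\cdot),\cdot)-u_e$ is a bounded linear operator from $X_1\times L^\infty(\Omega,X_2)\times L^\infty(\Omega,W)$ into $L^\infty(\Omega,W)$; it is the adjoint of a bounded operator between the corresponding preduals (obtained by $\omega$-wise transposition of $e(\cdot,\cdot,\omega)$), hence weak$^*$--weak$^*$ continuous, and the zero set is weak$^*$ closed.

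The main obstacle lies in the two pointwise-a.s.\ inclusion constraints, because $\delta_{C_2}$ and $\delta_{-K}$ are extended-real-valued and the interior-of-domain hypothesis of \cref{thm:bounds-support-function} fails. I handle these via dual-pairing: the constraint $x_2(\omega)\in C_2$ a.s.\ is equivalent to
\begin{equation*}
\int_\Omega \langle \phi(\omega),x_2(\omega)\rangle_{X_2^*,X_2}\,\D\pP(\omega)\le \int_\Omega \sigma(C_2,\phi(\omega))\,\D\pP(\omega)
\end{equation*}
for every $\phi\in L^1(\Omega,X_2^*)$, which by weak$^*$-continuity of the left-hand side and lower semicontinuity of the right-hand side (the integrand $\sigma(C_2,\cdot)$ being lsc and sublinear, giving a normal convex integrand) exhibits the feasible set as an intersection of weak$^*$-closed half-spaces. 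Analogously, $i(x_1,x_2(\omega),\omega)\le_K u_i(\omega)$ a.s.\ is equivalent to $\int \langle \lambda_i(\omega),u_i(\omega)-i(x_1,x_2(\omega),\omega)\rangle\,\D\pP\ge 0$ for every $\lambda_i\in L^1(\Omega,R^*)$ with $\lambda_i(\omega)\in K^\oplus$ a.s. For each such fixed $\lambda_i$, the integrand $\langle\lambda_i(\omega),i(x_1,x_2,\omega)\rangle$ is a normal convex integrand in $(x_1,x_2)$ (using $K$-convexity, continuity, and the growth bound on $i$), so by \cref{cor:duality-conjugate-functions} the associated integral functional is weak$^*$ lsc on $X$; combined with weak$^*$ continuity of the pairing with $u_i$, this yields a weak$^*$-closed half-space, and intersecting over all admissible $\lambda_i$ yields the feasible set for the inequality constraint. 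Combining all summand-wise weak$^*$ lsc conclusions gives weak$^*$ lower semicontinuity of $\varphi$.
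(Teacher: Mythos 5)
Your argument is correct in outline but takes a genuinely different route from the paper. The paper's proof splits $\varphi$ into only two pieces, $\varphi_1(x_1)=J_1(x_1)+\delta_{C_1}(x_1)$ and a single \emph{extended-real-valued} normal convex integrand $\varphi_2(x_1,x_2,u,\omega)$ that absorbs all three pointwise constraints ($x_2\in C_2$, $e=u_e$, $i\leq_K u_i$) into its domain; it then verifies that both $I_{\varphi_2}$ and $I_{\varphi_2^*}$ are proper (the latter by comparing $\varphi_2$ with the finite minorant $h=J_2$ and invoking \cref{cor:duality-conjugate-functions}) and concludes from the Rockafellar--Levin conjugacy theorem that $I_{\varphi_2}$ is a conjugate function, hence weak$^*$ lsc, in one stroke. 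You instead peel the constraints apart and certify each summand separately: the finite integrand $J_2$ via \cref{thm:bounds-support-function} and \cref{cor:duality-conjugate-functions} (composed with the injection $\iota$ of constants, as in the proof of the KKT theorem), the equality constraint via a pre-adjoint/weak$^*$-continuity argument, and the two a.s.\ inclusion constraints via separation and intersections of weak$^*$-closed sets. What the paper's route buys is economy: the extended-real-valued indicator terms never have to be characterized by dual pairings, because the conjugacy theorem for integral functionals handles them directly. What your route buys is transparency: it isolates exactly which structural hypothesis (linearity of $e$, $K$-convexity of $i$, closed convexity of $C_2$) makes each constraint set weak$^*$ closed, and it avoids having to check normality of the composite integrand $\varphi_2$.

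Two points need tightening. First, the opening reduction to \emph{sequential} weak$^*$ lower semicontinuity is not justified as stated: sequential weak$^*$ lsc does not imply weak$^*$ lsc in general, and rescuing it requires convexity of $\varphi$ together with the Krein--\v{S}mulian theorem (boundedly weak$^*$-closed convex sets in the dual of a separable space are weak$^*$ closed). Since the body of your proof in fact establishes topological weak$^*$ lsc of every summand, you should simply drop the sequential framing. Second, the ``only if'' directions of your dual characterizations --- that failure of $x_2(\omega)\in C_2$ (resp.\ of $u_i(\omega)-i(x_1,x_2(\omega),\omega)\in K$) on a set of positive measure produces an admissible $\phi\in L^1(\Omega,X_2^*)$ (resp.\ $\lambda_i\in L^1(\Omega,R^*)$ with values in $K^\oplus$) violating the integrated inequality --- require a measurable selection of separating functionals together with the bipolar identity $K=K^{\oplus\oplus}$; these are standard (cf.\ the selection arguments in the proof of \cref{thm:bounds-support-function}) but must be stated, since without them the feasible sets are only contained in, not equal to, your intersections of weak$^*$-closed sets. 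Also note that the sets you obtain for fixed $\lambda_i$ are sublevel sets of the convex weak$^*$ lsc functional $x\mapsto I_{\langle\lambda_i,i\rangle}(x)-\int_\Omega\langle\lambda_i(\omega),u_i(\omega)\rangle_{R^*,R}\D\pP(\omega)$ rather than half-spaces; the closedness conclusion is unaffected.
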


\begin{proof}
We argue as in~\cite[Proposition~3]{Rockafellar1976}. Let $Y:=X \times U$ and denote the pairing on $Z:=X' \times \Z$ by
\begin{equation}
\label{eq:product-dual-pairing}
\langle y,z\rangle_{Y,Z}:= \langle x,x'\rangle_{X,X'} + \langle u,\lambda \rangle_{U,\Z}.
\end{equation}  
Since $Y=Z^*$, the topology induced by the pairing \eqref{eq:product-dual-pairing} coincides with the weak$^*$ topology on $Y$. We define
  $\varphi_1(\u) = J_1(\u),$ if $\u \in C_1$ and $\varphi_1(\u) = \infty$
  if $\u \not\in C_1$ and
\begin{align*}
&\varphi_2(\u,\y,u,\omega) = \begin{cases}
J_2(\u,\y), & \text{if } \y \in C_2, e(\u,\y,\omega) =u_e, \, i(\u, \y, \omega) \leq_K u_i,\\
\phantom{-}\infty, & \text{otherwise}.
\end{cases}
\end{align*}
Obviously, $\varphi(x,u) =\varphi_1(\u) + \int_{\Omega} \varphi_2(\u,\y(\omega),u(\omega),\omega) \D \pP(\omega)$.
Let $\langle \cdot,\cdot \rangle_{Y',Z'}$ denote the pairing of
$Y':=\U \times X_2 \times (W\times R)$ with
$Z':=\U^* \times \Y^* \times (W^* \times R^*)$; then the conjugate
integrand to $\varphi_2$ is given by
\begin{equation*}
  \varphi_2^*(z',\omega) = \sup_{y' \in Y'} \{ \langle y',z' \rangle_{Y',Z'} - \varphi_2(y',\omega)\}.
\end{equation*}

Defining $h(y',\omega)= J_2(\u,\y)$ for $y'= (\u,\y,u)$ we have
$h(y',\omega) \leq \varphi_2(y',\omega)$ a.s. The function $h$ is a normal
convex integrand and is integrable on
$X_1 \times L^\infty(\Omega,X_2) \times (L^\infty(\Omega,W) \times L^\infty(\Omega,R))$
by  \cref{assumption:general-problem}. Thus with the conjugate
integrand $h^*$, $I_h$ and $I_{h^*}$ are conjugate to each other by
\cref{cor:duality-conjugate-functions}, meaning that
$I_{h^*} \not\equiv \infty.$ 

Since $h \leq \varphi_2$ we have $h^* \geq \varphi_2^*$,
and hence there exists a point
$z \in Z$ such that $I_{\varphi_2^*}(z) < \infty$.
Since there clearly exists a point such that $I_{\varphi_2}$ is finite, it
follows that $I_{\varphi_2}$ and $I_{\varphi_2^*}$ are conjugate to one another
and are weak$^*$ lower semicontinuous,
see~\cite[p.~227]{Rockafellar1971}.
Since $\varphi_1$ is also weakly lower semicontinuous with respect to the
natural pairing on the reflexive space $X_1$, $\varphi_1$
and hence $\varphi$ are also weak$^*$
lower semicontinuous. 
\end{proof}

The following result is based on~\cite[Theorem 3]{Rockafellar1976}. We
define the value function 
\begin{equation}
\label{eq:value-function}
v(u): = \inf_{x \in X} \varphi(x,u).
\end{equation}
Obviously, $v(0) = \inf \textup{P}$. For the next result, we define the
second-stage admissible set by
\begin{equation}
\label{eq:second-stage-feasible-set}
X_{2,0} = \{ \y \in L^\infty(\Omega,\Y): \y(\omega) \in C_2 \text{ a.s.}\}.
\end{equation}

\begin{theorem}
\label{thm:minP-supD}
Let  \cref{assumption:general-problem} be satisfied. Supposing
$C_1$ and $C_2$ are bounded sets, then 
\begin{equation*}
  -\infty < \min \textup{P} = \sup \textup{D},
\end{equation*}
meaning that the primal problem attains its minimum, and the minimal
value coincides with the supremum of the dual, which need not be attained.
\end{theorem}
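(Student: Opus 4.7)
The plan is to use the classical perturbation framework of convex duality: express $\sup \textup{D}$ as the biconjugate of the value function $v(u) := \inf_{x \in X} \varphi(x,u)$ evaluated at $u=0$, so that strong duality reduces to the Fenchel--Moreau identity $v(0) = v^{**}(0)$. The three ingredients are (i) attainment of $\min \textup{P}$ via weak$^*$ compactness and the weak$^*$ lower semicontinuity from \cref{lemma:lsc-conjugate-function}, (ii) the identification $\sup \textup{D} = v^{**}(0)$ by a routine interchange of infima, and (iii) weak$^*$ lower semicontinuity of $v$, which is the main technical step.

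For attainment, I would first verify $\inf \textup{P}>-\infty$. By \cref{assumption:general-problem}, $J_1$ is convex and everywhere finite on the Banach space $\U$, hence continuous and bounded on the bounded set $C_1$, while the growth bound on $J_2$ combined with $\lVert \y(\omega) \rVert_{\Y} \leq \sup_{y \in C_2}\lVert y\rVert_{\Y} < \infty$ gives a uniform bound on $\E[J_2(\u,\y(\cdot))]$ over the feasible set. Since $C_1$ is weakly compact (bounded, closed, convex in the reflexive $\U$) and since, by \eqref{eq:isometric-isomorphism}, $L^\infty(\Omega,\Y)$ is the dual of $L^1(\Omega,\Y^*)$, Banach--Alaoglu places $X_{2,0}$ inside a weak$^*$-compact ball. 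A minimizing net in the feasible set then admits a weak$^*$ convergent subnet, and \cref{lemma:lsc-conjugate-function} ensures that its limit is feasible and attains the infimum.

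For the duality identification, interchanging the infima in the definition of $g$ yields
\begin{align*}
  g(\z) = \inf_{x \in X} \inf_{u \in U} \{\langle u,\z\rangle_{U,\Z} + \varphi(x,u)\} = \inf_{u \in U}\{v(u) + \langle u,\z\rangle_{U,\Z}\} = -v^*(-\z),
\end{align*}
so that $\sup \textup{D} = -\inf_{\z \in \Z} v^*(\z) = v^{**}(0)$, with conjugates taken relative to the pairing \eqref{eq:dual-pairing}. The function $v$ is convex (from joint convexity of $\varphi$), and proper: $v(0) = \min\textup{P} \in \R$ by attainment, while $v \geq \inf_{x \in X_0} j(x)>-\infty$ by the boundedness argument above. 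Note also that $U = \Z^*$ via \eqref{eq:isometric-isomorphism} and reflexivity of $W,R$, so $\sigma(U,\Z)$ is exactly the weak$^*$ topology on $U$, and Fenchel--Moreau applies once lower semicontinuity is established.

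The main obstacle is therefore the weak$^*$ lower semicontinuity of $v$, which I would obtain by showing that $\mathrm{epi}(v) \subset U \times \R$ is weak$^*$-closed. Given a convergent net $(u_\alpha,\beta_\alpha)\to(u_0,\beta_0)$ with $v(u_\alpha) \le \beta_\alpha$, choose $x_\alpha\in X$ with $\varphi(x_\alpha,u_\alpha)\le \beta_\alpha + \varepsilon_\alpha$ and $\varepsilon_\alpha\downarrow 0$. The key observation is that any such $x_\alpha$ is forced into the fixed product $C_1\times X_{2,0}$, which, precisely because both $C_1$ and $C_2$ are bounded, is weak$^*$-compact independently of the perturbation $u_\alpha$. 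Extracting a subnet $x_{\alpha_k}\to \bx$ in the weak$^*$ topology, joint weak$^*$ lower semicontinuity of $\varphi$ from \cref{lemma:lsc-conjugate-function} gives
\begin{align*}
  v(u_0) \le \varphi(\bx,u_0) \le \liminf_k \varphi(x_{\alpha_k},u_{\alpha_k}) \le \liminf_k (\beta_{\alpha_k} + \varepsilon_{\alpha_k}) = \beta_0,
\end{align*}
so $(u_0,\beta_0) \in \mathrm{epi}(v)$. This is the delicate step, as it leans essentially on the hypothesis that $C_2$ (not just $C_1$) is bounded, so that the $L^\infty$-ball trapping the second-stage variable is uniform in $u$. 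Combining the three ingredients yields $\min \textup{P} = v(0) = v^{**}(0) = \sup \textup{D}$.
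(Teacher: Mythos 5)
Your proposal is correct and follows essentially the same route as the paper's proof: weak$^*$ compactness of the feasible set from boundedness of $C_1$ and $C_2$, weak$^*$ lower semicontinuity of $\varphi$ from \cref{lemma:lsc-conjugate-function}, the identity $g(\lambda) = -v^*(-\lambda)$, lower semicontinuity of $v$ obtained by trapping the $x$-component in the weak$^*$ compact set $X_0$, and the Fenchel--Moreau identity $v(0)=v^{**}(0)$. The only cosmetic differences are that you argue with epigraphs and nets where the paper projects level sets, and you fold the weak$^*$ closedness of $X_{2,0}$ into the lower semicontinuity of $\varphi$ rather than proving it separately via the integrand $\delta_{C_2}$.
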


\begin{proof}
We first show that $X_{2,0}$ is compact with respect to the weak$^*$ topology on $L^\infty(\Omega,\Y)$.
This follows
by showing that $I_h$ and $I_{h^*}$ are conjugate to each other, where
$h(\y,\omega) := \delta_{C_2}(x_2)$ and $h^*$ denotes the conjugate of $h$. Since $C_2 \neq \emptyset$ is
convex and closed, $h$ is a normal convex integrand. It is easy to see that
$h^*(0,\omega) = 0$, so in particular $I_{h^*}(0) < \infty$, meaning there
exists a point where $I_{h^*}$ is finite. Note $I_h$ is also finite in at
least one point since $C_2$ is nonempty. It follows that $I_h$ and $I_{h^*}$
are conjugate to one another, meaning that $I_h$ is lower semicontinuous 
with respect to the weak$^*$ topology on $L^\infty(\Omega,\Y)$. 
In particular, for a weak$^*$ convergent sequence $\{y_n\} \subset X'_{2,0}:= \{ \y \in L^\infty(\Omega,\Y): I_h(\y) \leq 0 \}$ such that $y_n \rightharpoonup^* \bar{y}$ it follows that 
\begin{align*}
\liminf_{n \rightarrow \infty} I_{h}(y_n) \geq   I_{h}(\bar{y}),
\end{align*}
so $\bar{y} \in X'_{2,0}$; hence, $X'_{2,0}$ is closed with respect to
to the weak$^*$ topology. Here, we used the fact that weak* compactness coincides with weak* sequential compactness on $L^\infty(\Omega,X_2)$, since it is the dual of a separable space.
By definition of $h$, we deduce that
$\bar{y}(\omega) \in C_2$ a.s.~and therefore $X_{2,0}$ is also closed. Of
course, $X_{2,0}$ is bounded, so $X_{2,0}$ is weak$^*$ compact, see,
e.g.,~\cite[Corollary V.4.3]{DunfordSchwartz:I:1957}.
It is clear that the set $C_1$ is compact in $\U$ with
respect to the weak topology on $\U$. It therefore follows
that $X_0$ is weak$^*$ compact.

Since $X_0$ is weak$^*$ compact and by \cref{lemma:lsc-conjugate-function},
$\varphi$ is weak$^*$ lower semicontinuous on $X\times U$, we have
for all $u \in U$ that
\begin{equation*}
  \inf_{x \in X} \varphi(x,u) = \inf_{x \in X_0} \varphi(x,u) = \min_{x \in X_0} \varphi(x,u) = v(u) > -\infty.
\end{equation*}
It is easy to verify $-v^*(-\z) = g(\z)$ and hence
$v^{**}(u) = \sup_{\z \in \Z} \{ g(\z) - \langle \z,u \rangle_{\Z,U}\}$.
It follows that
\begin{equation*}
  v^{**}(0) = \sup_{\z \in \Z} g(\z) = \sup \textup{D}.
\end{equation*}
To conclude the proof, we show that $v$ is weak$^*$ lower semicontinuous in $U$.
Notice that the level set 
$ \textup{lev}_\alpha \varphi = \{ (x,u) \in X \times U: \varphi(x,u) \leq \alpha\}$
is weak$^*$-closed by weak$^*$ lower semicontinuity of $\varphi$,
see \cref{lemma:lsc-conjugate-function}.
Additionally, $\varphi$ is
finite only if $x \in X_0$, so the projection of
$\textup{lev}_\alpha \varphi$
onto $X$ is contained in $X_0$. Thus the projection of
$\textup{lev}_\alpha \varphi$ onto $U$, which corresponds to the level set
$\{ u \in U: v(u) \leq \alpha \}$, is closed in the weak$^*$ topology, from
which we conclude that $v$ is weak$^*$ and weak lower semicontinuous.
Since $v > -\infty$ and $v$ is convex and lower semicontinuous, we have
that $v^{**} = v$ (cf.~\cite[Theorem 2.113]{Bonnans2013}) and therefore 
\begin{equation*}
  -\infty<\min \textup{P} = v(0) = v^{**}(0) = \sup \textup{D}.
\end{equation*}
\end{proof}

\begin{corollary}
Let  \cref{assumption:general-problem} be satisfied and $j$ be
radially
unbounded, i.e., $j(x) \rightarrow \infty$ as $\|x\|_X \rightarrow \infty$
then 
\begin{equation*}
  - \infty <\min \textup{P} = \sup \textup{D},
\end{equation*}
meaning that the primal problem attains its minimum, and the minimal
value coincides with the supremum of the dual, which need not be attained.
\end{corollary}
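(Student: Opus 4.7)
The plan is to follow the structure of the proof of \cref{thm:minP-supD}, replacing the weak$^*$ compactness of $X_0$ (which previously relied on boundedness of $C_1$ and $C_2$) with a coercivity argument based on the radial unboundedness of $j$. I assume the problem is feasible, since otherwise $\min \textup{P} = +\infty = \sup \textup{D}$ holds trivially.

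First I would establish attainment of $\min \textup{P}$: pick a minimizing sequence $\{x_n\} \subset X_0$, observe that eventually $j(x_n) \le \inf \textup{P} + 1$, and use radial unboundedness to obtain $\|x_n\|_X \le R$ for some $R > 0$. Since $X_1$ is reflexive and $L^\infty(\Omega,X_2) \simeq (L^1(\Omega,X_2^*))^*$ by~\eqref{eq:isometric-isomorphism}, closed balls in $X$ are weak$^*$ compact, and by separability of the predual they are in fact weak$^*$ sequentially compact. Extracting a weak$^*$ convergent subsequence $x_n \rightharpoonup^* \bar{x}$ and invoking weak$^*$ lower semicontinuity of $\varphi(\cdot,0)$ from \cref{lemma:lsc-conjugate-function} would yield $\varphi(\bar{x},0) \le \liminf_n \varphi(x_n,0) = \inf \textup{P}$, so $\bar{x}$ is a minimizer.

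For the duality $\min \textup{P} = \sup \textup{D}$, I would reuse the identity $v^{**}(0) = \sup \textup{D}$ from the proof of \cref{thm:minP-supD}, which reduces matters to $v(0) = v^{**}(0)$; by Fenchel--Moreau it is enough that the value function $v$ in~\eqref{eq:value-function} be convex, proper, and weak$^*$ lower semicontinuous. Convexity comes from convexity of $\varphi$ in $(x,u)$, and $v > -\infty$ because, by \cref{assumption:general-problem}, $j$ is bounded on bounded sets, which together with radial unboundedness makes $j$ bounded below on $X$. For weak$^*$ lower semicontinuity, since $U = \Lambda^*$ with $\Lambda$ a separable Banach space, I would apply the Krein--Smulian theorem and reduce to showing that $\{u \in U : v(u) \le \alpha\} \cap r B_U$ is weak$^*$ closed for every $r,\alpha > 0$. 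The weak$^*$ topology is metrizable on $r B_U$, so it suffices to check weak$^*$ sequential closedness: given a sequence $u_n \rightharpoonup^* u^*$ in $r B_U$ with $v(u_n) \le \alpha$, choose $x_n$ with $\varphi(x_n,u_n) \le \alpha + 1/n$, so that $j(x_n) \le \alpha + 1/n$; radial unboundedness again yields a bounded $\{x_n\}$, extract $x_n \rightharpoonup^* \tilde{x}$, and weak$^*$ lower semicontinuity of $\varphi$ gives $\varphi(\tilde{x},u^*) \le \alpha$, hence $v(u^*) \le \alpha$.

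The main obstacle is the loss of weak$^*$ compactness of $X_0$: the projection argument used in \cref{thm:minP-supD} to transfer weak$^*$ closedness from $\textup{lev}_\alpha \varphi$ to the level sets of $v$ no longer applies directly. Radial unboundedness supplies the needed coercivity on minimizing sequences, but this naturally yields only sequential weak$^*$ lower semicontinuity of $v$; Krein--Smulian combined with separability of $\Lambda$ is what bridges the gap to the topological weak$^*$ lower semicontinuity demanded by Fenchel--Moreau.
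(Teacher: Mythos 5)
Your argument is correct and reaches the stated conclusion, but it diverges from the paper at the one step that actually matters: weak$^*$ lower semicontinuity of the value function $v$. The paper's proof of this corollary is a one-line patch of \cref{thm:minP-supD}: it observes that boundedness of $C_1$ and $C_2$ entered only through the weak$^*$ compactness of $X_0$, and that radial unboundedness makes the sublevel set $N_0=\{x\in X: j(x)\le j(x_0)\}$ bounded, so one replaces $X_0$ by $X_0\cap N_0$ and repeats the earlier proof verbatim. In particular, the projection argument you declare inapplicable in fact survives: the $X$-projection of $\textup{lev}_\alpha\,\varphi$ is contained in $X_0\cap\{x: j(x)\le\alpha\}$, which is bounded and hence contained in a weak$^*$ compact ball, and that is all the closed-projection step requires. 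You instead establish weak$^*$ lower semicontinuity of $v$ via Krein--Smulian, metrizability of the weak$^*$ topology on balls, and a sequential extraction from a bounded set of almost-minimizers; this is valid (modulo separability of the preduals $L^1(\Omega,\cdot)$, an assumption the paper itself invokes implicitly in \cref{thm:minP-supD} when identifying weak$^*$ compactness with sequential compactness) and more self-contained, at the cost of extra machinery. Your attainment argument and the bound $v>-\infty$ via lower boundedness of $j$ are fine, the latter being even slightly more direct than the paper's compactness route. One small caveat: dismissing infeasibility with ``$\min\textup{P}=+\infty=\sup\textup{D}$ trivially'' is not quite right, since $\sup\textup{D}=v^{**}(0)$ could a priori be finite when $v(0)=+\infty$; equality in that case is itself a consequence of the lower semicontinuity of $v$ that you prove, so your argument covers it, but it is not a triviality.
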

\begin{proof}
Inspection of the proof of \cref{thm:minP-supD} shows that the only
place where boundedness of
$C_1$ and $C_2$ comes into play is the weak$^*$ compactness of $X_0$. However,
if $x_0\in X$ is an arbitrary
feasible point of~\eqref{eq:model-PDE-UQ-state-constraints-perturbed} then
the set $N_0:=\{ x\in X \colon j(x) \le j(x_0)\}$ is bounded due to radial
unboundedness of $j$. Hence, clearly,
\begin{equation*}
  \inf_{x \in X} \varphi(x,u) = \inf_{x \in X_0\cap N_0} \varphi(x,u) = \min_{x \in X_0\cap N_0} \varphi(x,u) = v(u) > -\infty
\end{equation*}
holds and the proof of \cref{thm:minP-supD} can be repeated.
\end{proof}

\cref{thm:minP-supD} has shown that a necessary condition for the
minimum to be obtained in Problem~\eqref{eq:model-problem-abstract-long}
is for $C_1$ and $C_2$ to be bounded sets. We will now focus on establishing
sufficient conditions. Recalling \cref{def:singular-functionals},
let $\mathcal{S}_e$ and $\mathcal{S}_i$ denote the sets of singular
functionals defined on $L^\infty(\Omega,W)$ and $L^\infty(\Omega,R)$,
respectively. 
We define
\begin{align*}
\Z^\circ &= \{ \z^\circ = (\p^\circ,\lami^\circ) \in \mathcal{S}_e \times \mathcal{S}_i\},\\
\Z_0^\circ &= \{ \z^\circ = (\p^\circ,\lami^\circ) \in \Z^\circ:  \lami^\circ(y) \geq 0\, \forall y \in L^\infty(\Omega,R): y \geq_K 0\text{ a.s.} \},
\end{align*}
as well as $ L^\circ(x,\z^\circ)=\p^\circ(e(\u,\y(\cdot),\cdot)) +\lami^\circ(i(\u,\y(\cdot),\cdot)).$
Given $\lambda^\circ \in \Lambda_0^\circ$, notice 
\begin{equation}
\label{eq:feasibility-negative-Lagrangian-term}
e(\u,\y(\omega),\omega) = 0, i(\u, \y(\omega),\omega) \leq_K 0 \text{ a.s.} \Rightarrow L^\circ(x,\lambda^\circ)\leq 0.
\end{equation}
Also, from the results in \cref{subsec:subdifferentiability-Linf},
we have that
$(\p,\p^\circ) \in L^1(\Omega,W^*) \times \mathcal{S}_e \cong(L^\infty(\Omega,W))^*$
and
$(\lami,\lami^\circ) \in L^1(\Omega,R^*) \times \mathcal{S}_i \cong(L^\infty(\Omega,R))^*$.
This means that $\Z \times \Z^\circ$ characterizes the dual space
$(L^\infty(\Omega,W) \times L^\infty(\Omega,R))^*$. Here, we are interested
in finding conditions under which the singular part $\Z^\circ$ vanishes in
the optimum.

With that goal in mind, we define an extension of the
Lagrangian~\eqref{eq:Lagrangian} for
Problem~\eqref{eq:model-problem-abstract-long} on the space
$X \times \Z \times \Z^\circ$ via
\begin{equation}
\label{eq:Lagrangian-extended}
\bar{L}(x,\z,\z^\circ)= \begin{cases}
L(x,\z) + L^\circ(x,\z^\circ) & \text{ if } x \in X_0, (\z,\z^\circ) \in \Z_0\times \Z_0^\circ, \\
-\infty, & \text{ if } x \in X_0, (\z,\z^\circ) \not\in \Z_0\times \Z_0^\circ,\\
\phantom{-}\infty, & \text{ if } x \not\in X_0.
\end{cases}\hspace*{-4mm}
\end{equation}
The corresponding extended dual problem is given by
\begin{equation}\tag{$\text{$\bar{\textup{D}}$}$}
\label{eq:dual-tilde-problem}
\max_{(\z,\z^\circ) \in \Z \times \Z^\circ} \left\lbrace \bar{g}(\z,\z^\circ) := \inf_{x \in X} \bar{L}(x,\z,\z^\circ) \right\rbrace.
\end{equation}
Clearly, $\bar{g}(\z,0) = g(\z)$ and thus
$\sup \textup{D} \leq \sup \bar{\textup{D}}.$
Additionally, $ \sup \bar{\textup{D}} \leq \inf \textup{P}$, since
by~\eqref{eq:feasibility-negative-Lagrangian-term}, we have
\begin{align*}
\sup_{(\z,\z^\circ)} \bar{g}(\z,\z^\circ) &= \sup_{(\z,\z^\circ)} \inf_{x \in X} \{ L(x,\z) + L^\circ(x,\z^\circ)\} \leq \inf_{x \in X} \sup_{(\z,\z^\circ)}  \{ L(x,\z) + L^\circ(x,\z^\circ)\}. 
\end{align*}

For a sufficient condition, we introduce the induced feasible set for the
first-stage variable $\u$:
\begin{align*}
\tilde{C}_1 := \{ \u \in \U\,&:\, \exists \y \in L^\infty(\Omega,\Y) \text{ s.t. } e(\u,\y(\omega),\omega) = 0 \text{ a.s.}, \\
&\qquad \quad  i(\u, \y(\omega), \omega) \leq_K 0 \text{ a.s.}, \, \y(\omega) \in C_2 \text{ a.s.}\}
\end{align*}
Problem~\eqref{eq:model-problem-abstract-long} is said to satisfy
the \textit{relatively complete recourse} condition if and only if 
\begin{equation}
\label{eq:relatively-complete-recourse}
C_1 \subset \tilde{C}_1.
\end{equation}
\begin{remark}
In fact, it is possible to relax this assumption to
$\text{ri } C_1 \subset \tilde{C}_1^{\circ}$, where $\text{ri } C_1$ denotes
the relative interior of $C_1$ and $\tilde{C}_1^{\circ}$ represents the
\textit{singularly induced feasible set}; see~\cite{Rockafellar1976b} for
more details. 
\end{remark}

Additionally, we will require a regularity condition. We call the problem
\textit{strictly feasible} if the value function $v$, defined
in~\eqref{eq:value-function}, satisfies
\begin{equation}
\label{eq:constraint-qualification}
0 \in \textup{int}\,\textup{dom}\, v.
\end{equation}

\begin{remark}
\label{rem:Slater-condition-explanation}
The condition~\eqref{eq:constraint-qualification} implies
by~\cite[Theorem 18]{Rockafellar1974} that $v$ is bounded above in a
neighborhood of zero and is continuous at zero. Notice that
$v(u) = \inf_{x \in X} \varphi(x,u)$ is only finite (and equal to $j(x)$)
if the constraints are satisfied, meaning $\u \in C_1$ and almost surely
$\y(\omega) \in C_2, e(\u,\y(\omega),\omega) = u_e(\omega), i(\u, \y(\omega), \omega) \leq_K u_i(\omega)$.
This condition can therefore be thought of as an ``almost sure''
Slater condition. The condition induces an interplay between the spaces $W$ and $R$. Additionally, since $i(x_1,x_2(\omega),\omega) \leq_K u_i(\omega)$ needs to be satisfied in a neighborhood of zero in $R$,  this in general implicitly requires that $K$ has interior points.
\end{remark}

\begin{theorem}
\label{thm:infP-maxD}
Let  \cref{assumption:general-problem} be satisfied. Suppose the
relatively complete recourse
condition~\eqref{eq:relatively-complete-recourse} is satisfied and
Problem~\eqref{eq:model-problem-abstract-long} is strictly feasible,
i.e.,~\eqref{eq:constraint-qualification} holds. Then 
\begin{equation*}
  \inf{\textup{P}}=\max{\textup{D}} < \infty,
\end{equation*}
meaning that the dual problem attains its maximum, and the maximal
value coincides with the infimum of the primal, which need not be attained.
\end{theorem}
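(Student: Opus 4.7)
The plan is to apply perturbation duality to the value function $v$ defined in~\eqref{eq:value-function}. Since the chain $\sup \textup{D} \le \sup \bar{\textup{D}} \le \inf \textup{P}$ has already been established, it suffices to produce $\bar\lambda \in \Lambda$ with $g(\bar\lambda) = \inf \textup{P}$. I will proceed in three stages: first verify that $v$ is proper convex and norm-continuous at $0 \in U$, so $\partial v(0) \neq \emptyset$ in $U^*$; then extract from Fenchel's identity a maximizer of the extended dual~\eqref{eq:dual-tilde-problem}; and finally exploit relatively complete recourse to force the singular component of the multiplier to vanish.

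For the first stage, convexity of $v$ is inherited from the joint convexity of $\varphi$ in $(x,u)$. Relatively complete recourse~\eqref{eq:relatively-complete-recourse} ensures primal feasibility, so $v(0) \leq j(x) < \infty$ for any feasible $x$ by the finiteness properties in \cref{assumption:general-problem}. Strict feasibility~\eqref{eq:constraint-qualification} places $0$ in $\textup{int}\,\textup{dom}\,v$, and together with the uniform bounds on $J_2$, $e$, and $i$ in \cref{assumption:general-problem}, $v$ is bounded above on a norm neighborhood of $0$. A convex function on a Banach space that is bounded above on a neighborhood of a point is continuous there, so $v$ is continuous at $0$ and $\partial v(0) \neq \emptyset$ in $U^*$. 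For any $-\bar\lambda^* \in \partial v(0)$, Fenchel's identity $v(0) + v^*(-\bar\lambda^*) = 0$ combined with $\bar g(\lambda,\lambda^\circ) = -v^*(-\lambda^*)$ gives $\bar g(\bar\lambda,\bar\lambda^\circ) = v(0) = \inf \textup{P}$, where $\bar\lambda^* = \bar\lambda + \bar\lambda^\circ$ is the Ioffe--Levin decomposition~\eqref{eq:decomposition-dual-space} of the subgradient into its absolutely continuous and singular parts.

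The main obstacle is showing $\bar\lambda^\circ = 0$, so that the maximizer of the extended dual actually lies in $\Lambda$ and is a maximizer of~\eqref{eq:dual-problem}. The strategy is to apply the singular-support bound from \cref{thm:bounds-support-function} and the characterization in \cref{cor:characterization-subdifferentials} to the integral functional underlying $\bar L$ at an optimal $\bar x$, which exists by lower semicontinuity arguments as in \cref{thm:minP-supD} restricted to the sublevel set produced by relatively complete recourse. The recourse condition~\eqref{eq:relatively-complete-recourse} supplies, for the optimal first-stage $\bar x_1 \in C_1$, a feasible second-stage variable that can be modified on any of the vanishing sets $\{F_n\}$ from \cref{def:singular-functionals} without losing feasibility; the uniform bounds from \cref{assumption:general-problem} keep the resulting constraint residuals essentially bounded. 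Substituting this family of perturbations into the saddle-point identity for $\bar L(\bar x,\bar\lambda,\bar\lambda^\circ) = \inf \textup{P}$ makes the estimate~\eqref{eq:bound-support-function} incompatible with any nonzero $\bar\lambda^\circ$, which forces $\bar\lambda^\circ \equiv 0$. Then $\bar\lambda \in \Lambda$ attains $g(\bar\lambda) = \inf \textup{P}$, which completes the proof.
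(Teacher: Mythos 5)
Your first stage matches the paper: continuity of $v$ at $0$ (from strict feasibility) plus Fenchel's identity yields attainment of the extended dual, $\inf \textup{P} = \max \bar{\textup{D}} < \infty$. The gap is in your third stage. You propose to kill the singular part $\bar\lambda^\circ$ by substituting perturbations into ``the saddle-point identity for $\bar L(\bar x,\bar\lambda,\bar\lambda^\circ)=\inf\textup{P}$,'' which presupposes that the primal infimum is attained at some $\bar x$ and that $(\bar x,\bar\lambda,\bar\lambda^\circ)$ is a saddle point. Under the hypotheses of this theorem there is no boundedness of $C_1,C_2$ and no coercivity, and the statement itself warns that the primal infimum need not be attained; your appeal to ``lower semicontinuity arguments as in \cref{thm:minP-supD} restricted to the sublevel set produced by relatively complete recourse'' does not produce a minimizer, since relatively complete recourse gives feasibility, not compactness of a sublevel set. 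Moreover, \cref{thm:bounds-support-function} and \cref{cor:characterization-subdifferentials} apply to integral functionals $I_f$, whereas $\bar L(\cdot,\lambda,\lambda^\circ)$ contains the non-integral terms $L^\circ(x,\lambda^\circ)$, so the bound \eqref{eq:bound-support-function} cannot be invoked for $\bar L$ directly without further work.

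The paper avoids all of this by arguing purely on the dual side: it proves the pointwise inequality $\bar g(\lambda,\lambda^\circ)\le g(\lambda)$ for \emph{every} $(\lambda,\lambda^\circ)\in\Lambda_0\times\Lambda_0^\circ$, so that discarding the singular part of any extended-dual maximizer cannot decrease the dual value (note it does not claim $\bar\lambda^\circ\equiv 0$). The mechanism is: (i) split the infimum over $x_2\in X_{2,0}$ into the regular part and $\ell(x_1,\lambda^\circ)=\inf_{x_2}L^\circ(x,\lambda^\circ)$, which is legitimate because $x_2$ may be modified on the vanishing sets $F_n$ of the singular functionals without changing $L^\circ$; (ii) apply Fenchel duality in the first-stage variable $x_1$ alone to $h-k$ with $k=-\ell(\cdot,\lambda^\circ)$; and (iii) use relatively complete recourse to get $\ell(x_1,\lambda^\circ)\le 0$ on $\tilde C_1\supset C_1$, whence $L(x,\lambda)\ge\bar g(\lambda,\lambda^\circ)$ for all $x\in X_0$. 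You correctly identified the role of the vanishing sets $F_n$ and of relatively complete recourse, but the argument you build around them requires a primal solution that the hypotheses do not supply; as written, the proof does not go through.
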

\begin{proof}
We modify the arguments from~\cite[Theorem 3]{Rockafellar1976a} to fit our
setting. By  \cref{rem:Slater-condition-explanation}, $v$ is bounded
above on a neighborhood of zero, so we have
by~\cite[Theorem 17]{Rockafellar1974} that
\begin{equation}
\label{eq:infP-equal-maxD-tilde}
\inf \textup{P} = \max \bar{\textup{D}} < \infty.
\end{equation}

In the next step, we prove that
condition~\eqref{eq:relatively-complete-recourse} implies 
\begin{equation}
\label{eq:relation-dual-extended-dual}
\bar{g}(\z,\z^\circ) \leq  g(\z) \quad \forall (\lambda,\lambda^\circ) \in \Lambda_0 \times \Lambda_0^\circ.
\end{equation}
With this the proof will be complete since now,
$ \max \bar{\textup{D}} \le \sup \textup{D} \le \max \bar{\textup{D}}$
is asserted and a solution $(\lambda,\lambda^\circ)$
of~\eqref{eq:dual-tilde-problem} gives a solution
$\lambda$ of~\eqref{eq:dual-problem}.

To show~\eqref{eq:relation-dual-extended-dual}, let $(\z,\z^\circ) \in \Z_0 \times \Z_0^\circ$ be arbitrary.
Recalling the feasible set~\eqref{eq:second-stage-feasible-set}, we define
\begin{equation*}
  \ell(\u,\z^\circ)=\inf_{\y \in X_{2,0}} L^\circ(x,\z^\circ).
\end{equation*}
We skip the trivial case $\bar{g}(\z,\z^\circ) = -\infty$ and now
show that
\begin{equation}
\label{eq:proof-infP-maxD-q}
\bar{g}(\z,\z^\circ) =\inf_{x \in X_0} \{ L(x,\z) + \ell(\u,\z^\circ)\}.
\end{equation}
It is obvious that
\begin{align*}
  &\inf_{\y \in X_{2,0}} \left\lbrace \E[ \bar{J}_2(\u,\y(\cdot),\z(\cdot),\cdot)]
  + L^\circ(x,\z^\circ)\right\rbrace\\
  &\quad \geq \inf_{\y \in X_{2,0}} \E[ \bar{J}_2(\u,\y(\cdot),\z(\cdot),\cdot) ]
  + \inf_{\y \in X_{2,0}} L^\circ(x,\z^\circ).
\end{align*}
By definition, for the functional $\p^\circ$ there exists a decreasing
sequence of sets $\{ F_{e,n}\} \subset \mathcal{F}$  such that
$\pP(F_{e,n}) \rightarrow 0$ as $n \rightarrow \infty$ and
$\p^\circ(w) = 0$ for all $w \in L^\infty(\Omega,W)$ such that $w = 0$
a.s.~on $F_{e,n}$. The sets $F_{i,n}$ corresponding to $\lami^\circ$ are
defined analogously.
We define $F_n = F_{e,n}\cup F_{i,n}$ and
\begin{equation*}
  y_n(\omega) = \begin{cases}
y'(\omega), &\omega \in F_n\\
y''(\omega), &\omega \not\in F_n
  \end{cases}
\end{equation*}
for arbitrary $y', y'' \in X_{2,0}$.
If $\omega \in F_n$, then
$e(\u,y_n(\omega),\omega) = e(\u,y'(\omega),\omega)$ and
$i(\u,y_n(\omega),\omega) = i(\u,y'(\omega),\omega),$ implying
$  \p^\circ(e(\u,y_n(\omega),\omega)) = \p^\circ(e(\u,y'(\omega),\omega))$
and
$  \lami^\circ (i(\u,y_n(\omega),\omega)) = \lami^\circ(i(\u,y'(\omega),\omega)).$
Thus, for any $y', y''$, and $\varepsilon >0$, there exists an $n_0$ such
that for $n\geq n_0$ and $x_2=y_n$ it holds that
\begin{align*}
 &\E[ \bar{J}_2(\u,\y(\cdot),\z(\cdot),\omega)] +\p^\circ(e(\u,\y(\cdot),\cdot))  + \lami^\circ (i(\u,\y(\cdot),\cdot))\\
 &\quad \leq \E[\bar{J}_2(\u,y''(\cdot),\z(\cdot),\cdot)] + \p^\circ(e(\u,y'(\cdot),\cdot))  + \lami^\circ (i(\u,y'(\cdot),\cdot)) + \varepsilon.
\end{align*}
With that, we have shown~\eqref{eq:proof-infP-maxD-q}. We now define
\begin{equation*}
  h(\u) = \begin{cases}
\inf_{\y \in X_{2,0}} L(x,\z), & \text{if } \u \in C_1, \\
 \infty, & \text{else}
  \end{cases} \qquad \text{and} \qquad   k(\u) = - \ell(\u,\z^\circ).
\end{equation*}
Notice that $\bar{g}(\z,\z^\circ) = \inf_{\u \in \U} \{ h(\u) - k(\u)\}$.
Additionally, $h \not\equiv \infty$ is convex and $k>-\infty$ is concave.
Since $\bar{g}$ is finite, $k\not\equiv \infty$ and $h$ must be proper. Therefore, with
$h^*(v) = \sup_{\u \in \U} \{ \langle v, \u \rangle_{\U^*,\U} - h(\u)\}$ and
$k^*(v) = \inf_{\u \in \U} \{ \langle v,\u \rangle_{\U^*,\U} - k(\u)\}$, we have by Fenchel's duality theorem
(cf.~\cite[Theorem 6.5.6]{Aubin1990}) that
\begin{equation}
\label{eq:dual-max-equality}
\bar{g}(\z,\z^\circ) = \max_{\u^* \in \U^*} \{ k^*(\u^*) - h^*(\u^*)\}.
\end{equation}
Let $\u^*$ denote the maximizer of~\eqref{eq:dual-max-equality}, meaning
$\bar{g}(\z,\z^\circ) = k^*(\u^*) - h^*(\u^*).$
Then by definition of $h^*$, we have for all $\u \in \U$ that
\begin{equation}
\label{eq:bound-h-star}
h(\u) - \langle \u^*,\u\rangle_{\U^*,\U} \geq \bar{g}(\z,\z^\circ) - k^*(\u^*).
\end{equation}
Likewise by definition of $k$ and $k^*$, we get
\begin{equation*}
  \ell(\u,\z^\circ) + \langle \u^*,\u\rangle_{\U^*,\U} \geq k^*(\u^*).
\end{equation*}
It is clear that $\ell(\u,\z^\circ) \leq 0$ for all
$\u \in \tilde{C}_1$. Indeed, $\u \in \tilde{C}_1$ implies that there
exists a $x_2 \in X_{2,0}$ satisfying $e(\u,\y(\omega),\omega)=0$ and
$i(\u,\y(\omega),\omega)\leq_K 0$ a.s.
Recalling~\eqref{eq:feasibility-negative-Lagrangian-term}, we get $ \langle \u^*,\u \rangle_{\U^*,\U} \geq k^*(\u^*)$ for all $\u \in \tilde{C}_1 \supset C_1$.
From~\eqref{eq:bound-h-star} we thus have for all $\u \in C_1$ that 
$h(\u) \geq \bar{g}(\z,\z^\circ)$ holds, and hence 
\begin{equation*}
  L(x,\z) \geq h(\u) \geq \bar{g}(\z,\z^\circ)
\end{equation*} 
for all $x \in X_0$ and all
$(\lambda,\lambda^\circ) \in \Lambda\times \Lambda^\circ$.
It follows that $g(\z) \geq \inf_{x \in X_0} L(x,\z) \geq \bar{g}(\z,\z^\circ)$
and we have shown~\eqref{eq:relation-dual-extended-dual} finishing the proof.
\end{proof}

\begin{remark}
\label{rem:discrete-probability-space}
If the probability space is finite in the sense that $\Omega$ contains a finite number of points, then $L^\infty(\Omega,X)$ is reflexive since $X$ is reflexive; see~\cite[p.~100, Corollary~2]{Diestel1977}. In particular, $L^1(\Omega,X^*)$ and $L^\infty(\Omega,X)$ are paired spaces with the weak topology, and the Lagrangian $L(x,\lambda)$ coincides with the extended Lagrangian $L(x,\lambda,\lambda^\circ)$. Hence $L^\circ(x,\lambda^\circ) \equiv 0$ and \cref{thm:infP-maxD} holds without the relatively complete recourse condition \eqref{eq:relatively-complete-recourse}. This property can be exploited to obtain regular Lagrange multipliers for methods relying on a discrete approximation of (otherwise continuous) sample space $\Omega$.
\end{remark}

\subsection{Karush--Kuhn--Tucker Conditions}
\label{subsec:KKT}
In \cref{sec:existence-saddle-points}, we showed that saddle points
of the generalized Lagrangian exist under relatively mild assumptions. We
require that the constraint sets $C_1$ and $C_2$ are bounded. Additionally,
the problem must satisfy an almost sure strict feasibility condition in
addition to a standard assumption in stochastic models known as a relatively complete recourse assumption. We now turn to obtaining optimality conditions under
the assumption that a saddle point exists. This leads us to the following
central result.
\begin{theorem}
\label{thm:KKT-basic-conditions}
Let  \cref{assumption:general-problem} be satisfied. Then 
$(\bar{x},\bar{\z}) \in (X_1 \times L^\infty(\Omega,X_2))$ $\times (L^1(\Omega,W^*) \times L^1(\Omega,R^*))$
is a saddle point of the Lagrangian~\eqref{eq:Lagrangian} if and only if
there exists a function $\rho \in L^1(\Omega,\U^*)$ such that the following conditions are satisfied:
\begin{enumerate}[label=(\roman*)]
\item The function
  \begin{equation*}
    \u \mapsto J_1(\u) + \langle \E[\rho], \u \rangle_{\U^*,\U}
  \end{equation*}
attains its
minimum over $C_1$ at $\bu$. \label{eq:FOC-basic-problem1}
\item The function 
\begin{align*}(\u,\y) \mapsto &J_2(\u,\y) + \langle \bp(\omega), e(\u,\y,\omega)\rangle_{W^*,W} \\
&\quad + \langle \bl(\omega),i(\u,\y,\omega) \rangle_{R^*,R} - \langle \rho(\omega), \u \rangle_{\U^*,\U} 
\end{align*}
attains its minimum in $\U \times C_2$ at $(\bu,\by(\omega))$ for almost
every $\omega \in \Omega$. \label{eq:FOC-basic-problem2}
\item It holds that $\bu \in C_1$ and the following conditions hold almost
  surely:
\begin{align*}
&e(\bu, \by(\omega),\omega)=0, \quad \by(\omega) \in C_2,\quad \bl(\omega)\in K^\oplus,\\
& i(\bu, \by(\omega),\omega) \leq_K 0, \quad \langle \bl(\omega), i(\bu, \by(\omega),\omega) \rangle_{R^*,R} = 0.
\end{align*}
\label{eq:FOC-basic-problem3}
\end{enumerate}
\end{theorem}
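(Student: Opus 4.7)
The plan is to verify the two saddle-point inequalities in \eqref{eq:saddle-point-property} in both directions using the explicit representation \eqref{eq:Lagrangian} of $L$, together with the subdifferential calculus from \cref{cor:characterization-subdifferentials} for the minimization-in-$x$ step. The main technical obstacle will be the combined application of that corollary to the two-variable integrand $\psi$ defined below together with a chain rule through the constant embedding $A:\U\hookrightarrow L^\infty(\Omega,\U)$: this converts the $\U$-component of a pointwise $L^1$-subgradient into the deterministic element $\E[\rho]\in\U^*$ appearing in (i), while the $\Y$-component has to be absorbed into the normal cone of $X_{2,0}$ so that (ii) retains only the $\rho$-correction.

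For the sufficiency direction, assume (i)--(iii). Condition (iii) gives $\bx\in X_0$, so the singular cases in \eqref{eq:Lagrangian} are avoided. For any $\z\in\Z_0$,
\begin{equation*}
L(\bx,\z)-L(\bx,\bz)=\E\bigl[\langle \p-\bp, e(\bu,\by(\cdot),\cdot)\rangle_{W^*,W}+\langle \lami-\bl, i(\bu,\by(\cdot),\cdot)\rangle_{R^*,R}\bigr],
\end{equation*}
which by (iii) is non-positive a.s.\ ($e=0$, $\langle \bl,i\rangle=0$, and $\langle \lami,i\rangle\le 0$ since $\lami(\omega)\in K^\oplus$ and $-i(\bu,\by(\omega),\omega)\in K$), giving $L(\bx,\z)\le L(\bx,\bz)$; for $\z\notin\Z_0$ the left side is $-\infty$. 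For the inequality in $x$, any $x\in X_0$ satisfies $x_2(\omega)\in C_2$ a.s., so (ii) evaluated with parameters $\bz(\omega)$ yields pointwise
\begin{equation*}
\bar J_2(x_1,x_2(\omega),\bz(\omega),\omega)\ge \bar J_2(\bu,\by(\omega),\bz(\omega),\omega)+\langle \rho(\omega),x_1-\bu\rangle_{\U^*,\U};
\end{equation*}
integrating over $\Omega$ and adding (i) produces $L(x,\bz)\ge L(\bx,\bz)$.

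For necessity, I would extract (iii) first. Finiteness of the saddle value forces $\bx\in X_0$ and $\bz\in\Z_0$, delivering $\bu\in C_1$, $\by(\omega)\in C_2$ and $\bl(\omega)\in K^\oplus$ a.s. Since $\z\mapsto L(\bx,\z)$ is affine on $\Z_0$ with maximum at $\bz$, testing $\z=\bz\pm t(w,0)$ with $w\in L^1(\Omega,W^*)$, $t\in\R$, forces $\E[\langle w,e(\bu,\by(\cdot),\cdot)\rangle_{W^*,W}]=0$ and hence $e(\bu,\by(\omega),\omega)=0$ a.s. Testing with $\lami=0$ and $\lami=2\bl$ yields $\E[\langle\bl,i(\bu,\by(\cdot),\cdot)\rangle_{R^*,R}]=0$; if $i(\bu,\by(\omega),\omega)\not\in -K$ on a measurable set of positive measure, a Hahn--Banach separation together with a measurable selection in $K^\oplus$ produces $\lami\in L^1(\Omega,R^*)$ with $\E[\langle\lami,i\rangle]>0$, contradicting the maximality of $\bz$; hence $i(\bu,\by(\omega),\omega)\le_K 0$ a.s. The pointwise sign $\langle\bl,i\rangle\le 0$ a.s., combined with $\E[\langle\bl,i\rangle]=0$, then enforces the complementary slackness in (iii).

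For (i) and (ii), the right saddle-point inequality states that $\bx$ minimizes $L(\cdot,\bz)$ over $X$, which by \eqref{eq:Lagrangian} is the minimization of $J_1(x_1)+I_\psi(x_1,x_2)$ over $C_1\times X_{2,0}$, with
\begin{equation*}
\psi(x_1,x_2,\omega):=J_2(x_1,x_2)+\langle \bp(\omega), e(x_1,x_2,\omega)\rangle_{W^*,W}+\langle \bl(\omega), i(x_1,x_2,\omega)\rangle_{R^*,R}
\end{equation*}
and $X_{2,0}$ as in \eqref{eq:second-stage-feasible-set}. By \cref{assumption:general-problem} combined with $\bz\in\Z$, $\psi$ is a normal convex integrand on $\U\times\Y$ that is majorized on bounded sets by an integrable function of $\omega$, so \cref{cor:characterization-subdifferentials} applies to $I_\psi$ at $(A\bu,\by)\in L^\infty(\Omega,\U\times\Y)$, where $A:\U\to L^\infty(\Omega,\U)$ is the constant embedding with adjoint $A^*\rho=\E[\rho]$ on $L^1(\Omega,\U^*)$. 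The Moreau--Rockafellar sum rule is available because $J_1$ and $I_\psi$ are continuous on their respective domains under \cref{assumption:general-problem}, and yields a pair $(\rho,\sigma)\in L^1(\Omega,\U^*)\times L^1(\Omega,\Y^*)$ with $(\rho(\omega),\sigma(\omega))\in\partial\psi_\omega(\bu,\by(\omega))$ a.s., $-\sigma(\omega)\in N_{C_2}(\by(\omega))$ a.s., and $-\E[\rho]\in\partial J_1(\bu)+N_{C_1}(\bu)$. The last inclusion is exactly (i); combining the joint subgradient inequality for $\psi_\omega$ at $(\bu,\by(\omega))$ with the $C_2$-normal-cone inclusion for $\sigma(\omega)$ gives (ii) for every $(x_1,x_2)\in\U\times C_2$.
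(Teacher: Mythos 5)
Your proposal follows essentially the same route as the paper: extract (iii) from the maximality of $L(\bar{x},\cdot)$ at $\bar{\lambda}$, verify sufficiency by pointwise integration of (ii) plus (i), and obtain (i)--(ii) from the minimality of $L(\cdot,\bar{\lambda})$ via the Moreau--Rockafellar sum rule applied to $J_1+\delta_{X_0}$ plus the integral functional of the pointwise Lagrangian integrand (your $\psi$ is the paper's $h_2$, your constant embedding $A$ with $A^*\rho=\E[\rho]$ is the paper's $\iota$), with \cref{cor:characterization-subdifferentials} supplying the $L^1$ representation of the subgradient. The one step you assert rather than prove is the localization $-\sigma(\omega)\in N_{C_2}(\by(\omega))$ a.s.: the sum rule only delivers $-\sigma$ as a normal to $X_{2,0}$ in $(L^\infty(\Omega,\Y))^*$, i.e., $\E[\langle \sigma, \y-\by\rangle_{\Y^*,\Y}]\ge 0$ for all $\y\in X_{2,0}$, and passing to the pointwise statement for all $\y\in C_2$ is not automatic because the exceptional null set a priori depends on the test point and $C_2$ is uncountable. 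The paper closes this by testing with a countable dense subset $\hat{C}_2$ of $C_2$ and the measurable modification $\tilde{x}_2$ that switches to $\y$ exactly where the pairing is negative; you should include that argument (or an equivalent measurable-selection argument) to make the proof complete. Your treatment of the inequality constraint in the necessity of (iii) via separation and a measurable selection in $K^\oplus$ is a welcome precision over the paper's scalar-flavored sketch, but it is the same idea.
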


The appearance of this extra Lagrange multiplier $\rho$ in
\cref{thm:KKT-basic-conditions} might seem surprising; however, it
is standard in two-stage stochastic optimization. It is known as
a ``nonanticipativity'' constraint and comes from this particular setting,
where the first stage variable $\u$ is deterministic and the second-stage
variable $\y$ is random.

\paragraph{Proof of \cref{thm:KKT-basic-conditions}}
We follow the arguments from~\cite[Section 3]{Rockafellar1975}. We first
show that the existence of a saddle point implies condition (iii). Notice
that $(\bx,\blam)$ can only be a saddle point if
$(\bar{x},\bar{\lambda}) \in X_0 \times \Lambda_0$, which immediately
implies
\begin{equation*}
  \bu \in C_1, \quad \by(\omega) \in C_2 \text{ a.s.},\quad \bl(\omega) \in K^{\oplus} \text{ a.s.}
\end{equation*}
For $\bar{x} = (\bu,\by)$, we have by definition of the
Lagrangian~\eqref{eq:Lagrangian} that
\begin{align*}
&\sup_{\z \in \Z_{0}} L(\bar{x},\z) = \sup_{\z\in \Z_{0}} \Big\lbrace J_1(\bu) + \int_{\Omega} \bar{J}_2(\bu,\by(\omega), \z(\omega), \omega) \D \pP(\omega) \Big\rbrace.
\end{align*}
We now show that $\sup_{\z \in \Z_{0}} L(\bar{x},\z) = \infty$ unless
$e(\bu,\by(\omega),\omega)=0$ and $ i(\bu, \by(\omega),\omega))\leq_K 0$
a.s. Indeed, suppose that the set $E:=\{ \omega \in \Omega: -i(\bu, \by(\omega),\omega)\not\in K\}$
has positive probability, meaning $\pP(E) > 0.$ Then defining
$\lambda_n\equiv n$ on $E$ and $\lambda_n\equiv 0$ on
$\Omega \backslash E$, one gets
$\E[\langle \lambda_n, i(\u,\y(\cdot),\cdot) \rangle_{R^*,R}] \rightarrow \infty$
as $n \rightarrow \infty$. An analogous argument can be applied to the
equality constraint. Now, since $\bar{\lambda}_i(\omega) \in K^\oplus$ and
$ i(\bu, \by(\omega),\omega)\leq_K 0$ a.s., we have that
$\langle\bar{\lambda}_i(\omega), i(\bu, \by(\omega),\omega)\rangle_{R^*,R} \leq 0$
a.s. The supremum of $L(\bx, \z)$ can therefore only be attained at
$\blam$ if and only if
$\langle \bar{\lambda}_i(\omega), i(\bu, \by(\omega),\omega) \rangle_{R^*,R} = 0$
a.s. We have shown that if $(\bx,\blam)$ is a saddle point, then condition
(iii) is fulfilled.

It is easy to see that conditions (i)--(iii) imply that $(\bx,\bz)$ is a
saddle point. Indeed, for every $x = (\u,\y) \in X$, conditions (i)--(ii)
imply
\begin{align*}
L(\bx,\bz)&= J_1(\bu) + \langle \E[\rho], \bu \rangle_{\U^*,\U} + \E[\bar{J}_2(\bu,\by(\cdot),\bz(\cdot),\cdot) - \langle \rho(\cdot),\bu \rangle_{\U^*,\U}]\\
& \leq J_1(\u) + \langle \E[\rho], \u \rangle_{\U^*,\U} + \E[\bar{J}_2(\u,\y(\cdot),\bz(\cdot),\cdot) - \langle \rho(\cdot),\u\rangle_{\U^*,\U}] \\
&= L(x,\bz).
\end{align*}
To show that $L(\bar{x},\lambda) \leq L(\bar{x},\bar{\lambda})$ for all
$\lambda \in \Lambda$, it is enough to show that 
\begin{equation} 
\label{eq:expecation-inequality-KKT-proof}
\E[\bar{J}_2(\bu,\by(\cdot),\z(\cdot),\cdot)] \leq  \E[\bar{J}_2(\bu,\by(\cdot),\bz(\cdot),\cdot)] \quad \forall \lambda \in \Lambda.
\end{equation}
Since $e(\bu,\by(\omega),\omega) = 0$ and
$\langle \lambda_i(\omega),i(\bu,\by(\omega),\omega)\rangle_{R^*,R} \leq 0$
a.s.,~\eqref{eq:expecation-inequality-KKT-proof} must certainly be
satisfied, since (as we argued before) the maximum of $L(\bar{x}, \lambda)$
can only be attained if
$\langle \bar{\lambda}_i(\omega),i(\bu,\by(\omega),\omega)\rangle_{R^*,R}=0$
a.s.

Now, for the most involved part of the proof, we show that if $(\bx,\bz)$
is a saddle point, then conditions (i) and (ii) must be satisfied. To
simplify, we redefine $\bl$ so that $\bl(\omega) \geq 0$ for \textit{all}
$\omega \in \Omega$. We define 
\begin{equation}
\begin{aligned}
\label{eq:def-h2}
&h_2(\u,\y,\omega)\\
&=J_2(\u,\y) + \langle \bp(\omega), e(\u,\y,\omega)\rangle_{W^*,W} + \langle \bl(\omega), i(\u,\y,\omega)\rangle_{R^*,R}.
\end{aligned}
\end{equation}
The function $h_2$ is clearly convex in $X$;
$h_2(\u(\omega),\y(\omega),\omega)$ is integrable by
 \cref{assumption:general-problem} and the fact that
$\bp \in L^1(\Omega,W^*)$ and $\bl \in L^1(\Omega,R^*)$.
In particular, we get by \cref{cor:duality-conjugate-functions}
that 
\begin{equation*}
  H_2(\u,\y): = \int_\Omega h_2(\u(\omega),\y(\omega),\omega) \D \pP(\omega)
\end{equation*}
is well-defined and finite on $L^\infty(\Omega,\U) \times L^\infty(\Omega,\Y)$
as well as convex and continuous. 

Let $\iota: \U \times L^\infty(\Omega,\Y) \rightarrow L^\infty(\Omega,\U) \times L^\infty(\Omega,\Y)$
be the continuous injection, which maps elements of $\U$ to the
corresponding constant in $L^\infty(\Omega,\U)$ and maps each element
of $L^\infty(\Omega,\Y)$ to itself. Setting $H_1(x_1,x_2) = J_1(x_1)$ if $x \in X_0$ and $H_1(x_1,x_2) = \infty$ otherwise, we have
\begin{equation*}
  L(x,\bz) = H_1(x_1,x_2) + H_2(\iota(\u,\y)) \quad \forall x \in X_0.
\end{equation*}
From $L(\bx,\bz)=\min_{x \in X_0} L(x,\bz)$ it follows that
\begin{equation*}
  H_1(\bu,\by) + H_2(\iota(\bu,\by)) = \min_{(\u,\y) \in X_0} H_1(\u,\y)+H_2(\iota(\u,\y)).
\end{equation*}
By the Moreau--Rockafellar theorem
(cf., e.g.,~\cite[Theorem 2.168]{Bonnans2013}) we have, where $\iota^*$
maps $(L^\infty(\Omega,\U)\times L^\infty(\Omega,\Y))^*$ to
$(\U \times L^\infty(\Omega,\Y))^*$,
\begin{equation*}
  0 \in \partial H_1(\bu,\by) + \iota^* \partial H_2(\iota(\bu,\by)).
\end{equation*}
In particular, there exists
$q \in (L^\infty(\Omega,\U) \times L^\infty(\Omega,\Y))^*$ such that 
\begin{equation*}
  -\iota^* q \in \partial H_1(\bu,\by) \quad \text{and} \quad q \in \partial H_2(\iota(\bu,\by)).
\end{equation*}

Since $h_2$ satisfies the conditions of
\cref{cor:characterization-subdifferentials}, it follows that
$\partial H_2(\iota(\bu,\by)) \subset (L^\infty(\Omega,\U) \times L^\infty(\Omega,\Y))^*$
consists of continuous linear functionals on
$L^\infty(\Omega,\U) \times L^\infty(\Omega,\Y)$, which can be identified with
pairs $(q_1,q_2) \in L^1(\Omega,\U^*) \times L^1(\Omega,\Y^*)$ such that
\begin{equation}
\label{eq:subdifferential-h2}
q(\omega) = (q_1(\omega),q_2(\omega)) \in \partial h_2(\bu, \by(\omega),\omega) \quad \text{a.s.}
\end{equation}
Notice that for $q_1^* \in L^1(\Omega,\U^*)$, the adjoint
$\iota_1^*:(L^\infty(\Omega,\U))^* \rightarrow \U^*$ satisfies, for any $\u \in \U$,
\begin{align*}
\langle \iota_1^* q_1^*,\u \rangle_{\U^*,\U} & =  \langle q_1, \iota_1 \u \rangle_{L^1(\Omega,\U^*),L^\infty(\Omega,\U)} = \E[ \langle q_1(\cdot), x_1\rangle_{\U^*,\U}]. 
\end{align*}
Hence $\iota^* q = (\E[q_1], q_2) \in \U^* \times L^1(\Omega,\Y^*).$
Thus $-\iota^* q \in \partial H_1(\bu,\by)$ can be written as
\begin{equation*}
  H_1(\u,\y) \geq H_1(\bu,\by) - \langle \E[q_1], \u - \bu\rangle_{\U^*,\U} -\E[\langle q_2, \y - \by\rangle_{\Y^*,\Y}]
\end{equation*}
for all $(\u,\y)\in X$.
Recalling $H_1(\u,\y) = J_1(\u)$ if $x \in X_0$, we get
\begin{equation}
\label{eq:inequality-KKT-proof-0}
J_1(\u) \geq J_1(\bu) - \langle \E[q_1], \u - \bu \rangle_{\U^*,\U} \quad \forall \u \in C_1
\end{equation}
and
\begin{equation}
\label{eq:inequality-KKT-proof-1}
\E[\langle q_2, \y - \by\rangle_{\Y^*,\Y}] \geq 0 \quad \forall \y \in L^\infty(\Omega,\Y): \y(\omega)\in C_2 \text{ a.s.}
\end{equation}
The expression~\eqref{eq:inequality-KKT-proof-0} is clearly equivalent to condition (i).

We claim that~\eqref{eq:inequality-KKT-proof-1} implies 
\begin{equation}
\label{eq:inequality-KKT-proof-2}
\langle q_2(\omega), \y - \by(\omega) \rangle_{\Y^*,\Y} \geq 0 \quad \forall \y \in C_2 \text{ a.s. } 
\end{equation}
Let $\hat{C}_2$ be a countable dense subset of $C_2$. For
$\y \in \hat{C}_2$, we define
\begin{equation*}
  \tilde{x}_2(\omega):= \begin{cases}
 \y, & \text{if } \langle q_2(\omega), \y - \by(\omega) \rangle_{\Y^*,\Y} < 0\\
 \by(\omega), & \text{otherwise }  
  \end{cases}.
\end{equation*}
The function $\tilde{x}_2$ is clearly in $L^\infty(\Omega,\Y)$ and satisfies $\tilde{x}_2(\omega) \in C_2$ a.s.
Since~\eqref{eq:inequality-KKT-proof-1} holds we have
\begin{align*}
0 &\leq\E[\langle q_2(\cdot), \tilde{x}_2(\cdot) - \by(\cdot) \rangle_{\Y^*,\Y}] =\E[ \min(0,\langle q_2(\cdot), \y-\by(\cdot)\rangle_{\Y^*,\Y}) ],
\end{align*}
which gives $\langle q_2(\omega), \y-\by(\omega) \rangle_{\Y^*,\Y} \geq 0$
a.s. Since this is true for all $\y \in \hat{C}_2$ and $\hat{C}_2$ is
countable, there exists a set $\Omega'\subset \Omega$ such that
$\pP(\Omega')=1$ and
\begin{equation*}
  \langle q_2(\omega), \y-\by(\omega) \rangle_{\Y^*,\Y} \geq 0 \quad \forall \y \in \hat{C}_2 \text{ and } \forall \omega \in \Omega'.
\end{equation*}
Passing to the closure of $\hat{C}_2$, we get
\begin{equation*}
  \langle q_2(\omega), \y-\by(\omega) \rangle_{\Y^*,\Y} \geq 0 \quad \forall \y \in C_2 \text{ and } \forall \omega \in \Omega',
\end{equation*}
and hence we have shown~\eqref{eq:inequality-KKT-proof-2}.

Finally,~\eqref{eq:subdifferential-h2} implies
with~\eqref{eq:inequality-KKT-proof-2} that for all
$(x_1,x_2) \in \U \times C_2$,
\begin{equation*}
  h_2(\u,\y,\omega)\geq h_2(\bu, \by(\omega),\omega) + \langle q_1(\omega), \u - \bu \rangle_{\U^*,\U} \quad \text{a.s.}
\end{equation*}
With the definition of $h_2$ given in~\eqref{eq:def-h2}, it follows that
\begin{equation}
\label{eq:inequality-KKT-proof-3}
\begin{aligned}
&J_2(\u,\y) + \langle \bp, e(\u,\y,\omega)\rangle_{W^*,W} + \langle \bl, i(\u,\y,\omega) \rangle_{R^*,R} - \langle q_1(\omega), \u\rangle_{\U^*,\U} \\
&\quad\geq J_2(\bu,\by(\omega)) + \langle \bp,e(\bu,\by(\omega),\omega) \rangle_{W^*,W} \\
& \quad\quad  \quad \quad+ \langle \bl, i(\bu, \by(\omega),\omega)\rangle_{R^*,R} - \langle q_1(\omega), \bu\rangle_{\U^*,\U}
\end{aligned}
\end{equation}
for all $(x_1,x_2) \in \U \times C_2.$
The inequality~\eqref{eq:inequality-KKT-proof-3} is clearly equivalent to
condition (ii) with $\rho(\omega):=q_1(\omega)$. 

\section{Model Problem with Almost Sure State Constraints}
\label{sec:model-problem}
Before we proceed to a concrete example, we will discuss a particular class
of problems that will help us in verifying the measurability requirements
posed in  \cref{assumption:general-problem}. Let $\mathcal{L}(Y,W)$
denote the space of all bounded linear operators from $Y$ to $W$. A random
linear operator $\cA:\Omega \rightarrow \mathcal{L}(Y,W)$ is called strongly
measurable if for all $y\in Y$ the $W$-valued random variable
$\omega \mapsto \cA(\omega)y$ is strongly measurable.
Let $\cA:\Omega \rightarrow \mathcal{L}(Y,W)$,
$\cB:\Omega \rightarrow \mathcal{L}(\U,W)$, and $g:\Omega \rightarrow W$
be (strongly) measurable random operators. We consider the random linear
operator equation
\begin{equation}
\label{eq:random-PDE-abstract}
\mathcal{A}(\omega) y = \mathcal{B}(\omega)\u + g(\omega).
\end{equation} 
The inverse and adjoint operators are to be understood in the
``almost sure'' sense; e.g., for $\cB$, the adjoint operator is the random
operator $\cB^*$ such that for all $(\u, w^*) \in \U \times W^*$, 
\begin{align*}
\pP(\{\omega \in \Omega: \langle w^*, \cB(\omega)\u \rangle_{W^*,W} =  \langle  \cB^{*}(\omega)w^*,\u \rangle_{\U^*,\U} \}) = 1.
\end{align*}

The following theorem will help us verify measurability in the application.

\begin{theorem}[Hans~\cite{Hans1957}]
\label{thm:Hans}
Let $\cA:\Omega \rightarrow \mathcal{L}(Y,W).$ Then $\cA(\omega)$ is
invertible a.s if and only if $\textup{ran}(\cA^*(\omega)) = Y^*$ a.s. If
these conditions are satisfied, then $\cA^*(\omega)$ is invertible and
$(\cA^*(\omega))^{-1} = (\cA^{-1}(\omega))^*.$ Moreover, if any of the
operators $\cA(\omega)$, $\cA^{-1}(\omega)$, $\cA^*(\omega)$,
$ (\cA^{-1}(\omega))^*$  is measurable, then all four operators are
measurable.
\end{theorem}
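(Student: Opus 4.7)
The strategy is to decouple the result into two essentially independent parts: the pointwise (i.e.\ $\omega$-by-$\omega$) operator-theoretic characterization of invertibility, and the transfer of strong measurability across the four operators.

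For the pointwise part, I would fix $\omega$ on the full-measure set where the relevant conditions hold and work in classical Banach-space duality. Banach's open mapping theorem gives that bijectivity of $\cA(\omega)$ yields a bounded inverse $\cA^{-1}(\omega)$, whose adjoint $(\cA^{-1}(\omega))^*$ is well defined and satisfies $(\cA^{-1}(\omega))^* \cA^*(\omega) = I_{W^*}$ and $\cA^*(\omega)(\cA^{-1}(\omega))^* = I_{Y^*}$ by direct verification on the defining dual pairings; this already proves $(\cA^*(\omega))^{-1} = (\cA^{-1}(\omega))^*$ and in particular $\textup{ran}(\cA^*(\omega)) = Y^*$. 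Conversely, $\textup{ran}(\cA^*(\omega)) = Y^*$ forces $\cA(\omega)$ to be injective with closed range via the closed-range theorem; applying the same statement to $\cA^*(\omega)$ and using reflexivity of $Y$ and $W$ to identify $\cA^{**}(\omega)$ with $\cA(\omega)$ closes the loop and delivers bijectivity of $\cA(\omega)$.

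For the measurability transfer, the plan is first to show that $\cA$ is strongly measurable if and only if $\cA^*$ is. Since $Y$ is reflexive and separable, $Y^*$ is also separable, so by Pettis' theorem strong and weak measurability of $Y^*$-valued random variables coincide. Weak measurability of $\omega \mapsto \cA^*(\omega) w^*$ amounts to measurability of $\omega \mapsto \langle \cA^*(\omega) w^*, y \rangle_{Y^*,Y} = \langle w^*, \cA(\omega) y \rangle_{W^*,W}$ for each $y \in Y$, which is precisely the content of strong measurability of $\cA$; the reverse implication is symmetric via reflexivity. To pass from $\cA$ to $\cA^{-1}$, I would use that for every fixed $w \in W$ the map $\omega \mapsto \cA^{-1}(\omega) w$ is characterized as the unique $y \in Y$ solving $\cA(\omega) y = w$, and extract measurability of this selection by the Kuratowski--Ryll-Nardzewski theorem applied to the (single-valued) closed-valued map $\omega \mapsto \{ y \in Y : \cA(\omega) y = w \}$. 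Combined with the pointwise identity $(\cA^{-1})^* = (\cA^*)^{-1}$ established above, these two transfers propagate strong measurability from any one of the four operators to all of them.

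The main obstacle I anticipate is the inversion step. The map $\cA \mapsto \cA^{-1}$ is continuous in the operator-norm topology on the set of invertible operators but not in the strong operator topology used to define strong measurability, so one cannot merely compose with a continuous function. The measurable-selection approach sidesteps this by working directly with the graph of the solution map, but verifying that $\omega \mapsto \{ y \in Y : \cA(\omega) y = w \}$ has a measurable graph requires combining strong measurability of $\cA$ with separability of $Y$; this is precisely where the reflexive separable framework of the paper is essential.
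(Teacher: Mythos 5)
The paper does not prove this statement; it is imported verbatim from Hans~\cite{Hans1957} and used as a black box, so there is no internal proof to compare against and your proposal has to be judged on its own. The measurability half of your plan is sound in outline: the equivalence between strong measurability of $\cA$ and of $\cA^*$ via Pettis' theorem and separability of $Y^*$ (which follows from reflexivity and separability of $Y$) is correct, and the passage to $\cA^{-1}$ via a measurable-selection argument can be made to work, since $(\omega,y)\mapsto \cA(\omega)y$ is Carath\'eodory and hence jointly measurable and the probability space is complete, so the measurable projection theorem supplies the measurability of $\omega\mapsto\{y: \cA(\omega)y=w\}$ that Kuratowski--Ryll-Nardzewski needs. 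You are also right that norm-continuity of inversion is of no help for strong measurability.

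The genuine gap is in the converse of the first equivalence. From $\textup{ran}(\cA^*(\omega))=Y^*$ the closed range theorem together with $\ker \cA(\omega)={}^{\perp}\textup{ran}(\cA^*(\omega))$ yields only that $\cA(\omega)$ is injective with closed range, i.e., bounded below; it does not yield surjectivity. Your proposed repair --- ``applying the same statement to $\cA^*(\omega)$'' --- is circular: to invoke it you would need $\textup{ran}(\cA^{**}(\omega))=W^{**}$, which under reflexivity is precisely the surjectivity of $\cA(\omega)$ you are trying to establish. Nor can the step be salvaged from these ingredients alone: the operator $y\mapsto(y,0)$ from $Y$ into $Y\times Y$ is bounded below and has surjective adjoint, yet is not surjective, with all spaces reflexive and separable. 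To close the argument you need either an additional hypothesis (almost sure density of $\textup{ran}(\cA(\omega))$, equivalently almost sure injectivity of $\cA^*(\omega)$) or the weaker reading of ``invertible'' as ``boundedly invertible on its range,'' which is what surjectivity of the adjoint actually characterizes. For the way the theorem is used in \cref{lemma:existence-uniqueness-model-problem}, where $\cA(\omega)$ is known a priori to be an isomorphism, this does not matter, but as a proof of the stated equivalence the step fails.
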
 

If $\cA(\omega) \in \mathcal{L}(Y,W)$ is a linear isomorphism for almost
every $\omega$, then $\cA(\omega)$ is invertible and
$\cA^{-1}(\omega) \in \mathcal{L}(W,Y).$ The existence and uniqueness of
the solution to~\eqref{eq:random-PDE-abstract}, given by
\begin{equation*}
  y(\omega) = \cA^{-1}(\omega)(\cB(\omega) x_1 + g(\omega)) \in Y,
\end{equation*}
follows. By \cref{thm:Hans}, $\cA^{-1}(\omega)$ is measurable, hence
$y$ is strongly measurable as a product of strongly measurable functions;
see~\cite[Proposition~1.1.28, Corollary 1.1.28]{Hytoenen2016}.

\subsection{Example}
\label{subsection:Example}
Let $D \subset \R^2$ be a bounded Lipschitz domain. $W^{1,p}(D)$ denotes the
(reflexive and separable) Sobolev space on $D$ consisting of functions in
$L^p(D)$ having first-order distributional derivatives also in $L^p(D)$.
$W_{0}^{1,p}(D)$ is the subset of functions in $W^{1,p}(D)$ that vanish on
the boundary $\partial D$. Additionally, $W^{-1,p}(D)$ denotes the dual
space of $W^{1,p'}_0(D)$, where $\nicefrac{1}{p}+\nicefrac{1}{p'} = 1.$

We set $X_1 = L^2(D)$, $Y = W_0^{1,p}(D)$, for some suitable $p>2$, and let $C_1 \subset X_1$
and $C_2\subset Y$ be nonempty, convex, and closed sets. The inner product on $X_1$ is denoted by $(\cdot,\cdot)_{X_1}$. Given a target
$y_D \in \U$, a constant $\alpha >0$, and a constraint
$\psi \in L^\infty(\Omega,Y)$, the problem is 
\begin{equation}
\label{eq:model-PDE-UQ-state-constraints}
\tag{$\textup{P}'$}
 \begin{aligned}
&\min_{(x_1, y) \in X_1 \times L^\infty(\Omega,Y)}\quad \frac{1}{2} \E \left[ \lVert y - y_D \rVert_{\U}^2 \right]+ \frac{\alpha}{2} \lVert \u \rVert_{\U}^2 \\
&\quad  \text{s.t.} \left\{\begin{aligned}
x_1 &\in C_1, \\
y(\cdot,\omega) &\in C_2 \text{ a.s.},\\
  -\nabla \cdot (a(s,\omega) \nabla y(s,\omega)) &=  \u(s) + g(s,\omega)& \text{ on }& D \times \Omega \text{ a.e.}, \\
  y(s,\omega) &= 0 &\text{ on }& \partial D \times \Omega \text{ a.e.},\\
  y(s,\omega) &\leq \psi(s,\omega)& \text{ on }& D \times \Omega \text{ a.e.},
\end{aligned}\right.
 \end{aligned}
\end{equation}
where ``a.e.'' signifies almost everywhere in $D$ and almost surely in
$\Omega$. We note that the solution to the PDE is a random field
$y:\Omega \times D \rightarrow \R$; we use the shorthand
$y_\omega:=y(\cdot,\omega)$ to denote a single realization. The random
fields $a:D \times \Omega \rightarrow \R$ and
$g:D\times \Omega \rightarrow \R$ are subject to the following assumption.
\begin{assumption}
\label{assumption:regularity-random-field}
The function $g$ satisfies $g \in L^\infty(\Omega,L^2(D))$.
There exist $a_{\min}, a_{\max}$ such that 
$0 < a_{\min} \le a(s,\omega) \le a_{\max} < \infty$ a.e. on $D\times \Omega$.
Additionally, $a \in L^\infty(\Omega,C^t(D))$ for some $t \in (0,1]$.
\end{assumption}
It will be useful to define the (self-adjoint) operators 
\begin{equation*}
  \cA(\omega)y := b_\omega(y,\cdot) \quad \text{for} \quad
  b_\omega(y,\phi) := \int_D a(\cdot,\omega) \nabla y \cdot \nabla \phi \D s
\end{equation*}
and $\cB(\omega) := \textup{id}_{\U}.$ We first address the solvability of
the random PDE in Problem~\eqref{eq:model-PDE-UQ-state-constraints}. 
\begin{lemma}
\label{lemma:existence-uniqueness-model-problem}
Under  \cref{assumption:regularity-random-field}, there exists
$p > 2$ such that for all $\u \in \U$ and almost every $\omega \in \Omega$,
there exists a unique $y_\omega=y(\cdot,\omega) \in Y.$ Furthermore,
$y \in L^\infty(\Omega,Y)$.
\end{lemma}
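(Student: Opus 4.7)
The plan is to proceed in three conceptual steps: pointwise solvability in the Hilbert setting, higher integrability via a Meyers-type result uniform in $\omega$, and measurability together with essential boundedness. For almost every $\omega$ the bilinear form $b_\omega$ is continuous and coercive on $H_0^1(D)$ with constants $a_{\max}$ and $a_{\min}$ respectively, both independent of $\omega$. Since $x_1 + g(\cdot,\omega) \in L^2(D) \hookrightarrow H^{-1}(D)$ almost surely, Lax--Milgram provides a unique weak solution $y_\omega \in H_0^1(D)$ satisfying
\begin{equation*}
\|y_\omega\|_{H_0^1(D)} \leq a_{\min}^{-1}\bigl(\|x_1\|_{L^2(D)} + \|g(\cdot,\omega)\|_{L^2(D)}\bigr).
\end{equation*}

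To upgrade this to $y_\omega \in Y = W_0^{1,p}(D)$ for some $p > 2$ chosen uniformly in $\omega$, I would invoke a Meyers-type higher-integrability result for planar Lipschitz domains: there exists $p_M > 2$, depending only on $a_{\min}$, $a_{\max}$, and $D$, such that $\cA(\omega)$ restricts to an isomorphism $W_0^{1,p}(D) \to W^{-1,p}(D)$ for every $p \in [2, p_M]$, with operator norms bounded uniformly in $\omega$. The Hölder regularity $a \in L^\infty(\Omega, C^t(D))$ can be used to enlarge $p_M$ via classical Schauder or Stampacchia-type arguments, though some $p > 2$ is already guaranteed by the uniform ellipticity alone. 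In dimension two, $L^2(D) \hookrightarrow W^{-1,p}(D)$ continuously for all $p \in [2,\infty]$ by Sobolev duality, so the right-hand side $x_1 + g(\cdot,\omega)$ is an admissible datum, yielding
\begin{equation*}
\|y_\omega\|_{W_0^{1,p}(D)} \leq C\bigl(\|x_1\|_{L^2(D)} + \|g(\cdot,\omega)\|_{L^2(D)}\bigr)
\end{equation*}
with $C$ independent of $\omega$.

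Essential boundedness of $y\colon \Omega \to Y$ then follows at once from this uniform bound together with $g \in L^\infty(\Omega, L^2(D))$. For measurability, $a$ is strongly measurable with values in $C^t(D) \hookrightarrow L^\infty(D)$, so the operator $\omega \mapsto \cA(\omega)$ is strongly measurable as a map into $\cL(Y, W^{-1,p}(D))$; Theorem~\ref{thm:Hans} then supplies strong measurability of $\omega \mapsto \cA(\omega)^{-1}$, and $y_\omega = \cA(\omega)^{-1}(x_1 + g(\cdot,\omega))$ is strongly measurable as a composition of strongly measurable mappings into $Y$.

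The principal obstacle is the second step: securing a Meyers exponent $p > 2$ that works for almost every $\omega$ simultaneously. This hinges on the Meyers constants being controlled solely by the ellipticity bounds $a_{\min}, a_{\max}$ and the geometry of $D$, all of which are deterministic under Assumption~\ref{assumption:regularity-random-field}; once this uniformity is in hand, the essential-boundedness conclusion is an immediate corollary of the pointwise estimate.
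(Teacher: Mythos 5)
Your proposal is correct and follows essentially the same route as the paper: a uniform-in-$\omega$ higher-integrability (Meyers/Gr\"oger-type) isomorphism $\cA(\omega)\colon W^{1,p}_0(D)\to W^{-1,p}(D)$ for some $p>2$ with $\omega$-independent constants, the two-dimensional embedding $L^2(D)\hookrightarrow W^{-1,p}(D)$ to admit the data, the resulting uniform a priori bound for essential boundedness, and Hans's theorem for measurability of $\cA^{-1}(\omega)$ and hence of $y$. The paper simply cites Gr\"oger (1989) for the uniform isomorphism where you invoke Meyers, and it skips your preliminary Lax--Milgram step by working directly in the $W^{1,p}$ setting.
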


\begin{proof}
Due to  \cref{assumption:regularity-random-field}
and~\cite{Groeger1989} there exists
some $p > 2$ such that, a.s.,
$\mathcal{A}(\omega) \colon Y = W^{1,p}_0(D)\rightarrow W^{-1,p}(D)$
is an isomorphism and 
\begin{equation*}
\|\mathcal{A}^{-1}(\omega)\|_{\mathcal{L}(W^{1,p}_0(D),W^{-1,p}(D))} \le c
\end{equation*}
for a constant $c$ independent of $\omega$.

Now, since $D \subset \R^2$, $L^2(D) \subset W^{-1,p}(D)$ for all
$p < \infty$ and thus
\begin{equation*}
y_\omega = \mathcal{A}(\omega)^{-1}(\mathcal{B}(\omega)\u + g(\cdot,\omega)) \in Y
\end{equation*}
is well-defined with
$\mathcal{B}\colon L^2(D) \rightarrow L^\infty(\Omega, L^2(D))$ being the
mapping to constant functions in $\Omega$.

Clearly, it holds a.s.
\begin{align*}
  \|y_\omega\|_Y &\le \|\mathcal{A}^{-1}(\omega)\|_{\mathcal{L}(W^{1,p}_0(D),W^{-1,p}(D))} \|\mathcal B(\omega) \u + g(\omega)\|_{W^{-1,p}(D)}\\
  &\le c (\|\u\|_{W^{-1,p}(D)}+\|g(\omega)\|_{W^{-1,p}(D)}) \\
  &\le c (\|\u\|_{L^2(D)} + \|g\|_{L^{\infty}(\Omega;L^2(D))})
\end{align*}
Strong measurability of $y$ follows as argued after \cref{thm:Hans}.
\end{proof}

To obtain necessary and sufficient KKT conditions, we first note that unless
the constraint $\y(s,\omega) \leq \psi(s,\omega)$ is trivially satisfied
almost surely, Problem~\eqref{eq:model-PDE-UQ-state-constraints} does not
satisfy the relatively complete recourse
condition~\eqref{eq:relatively-complete-recourse}. It therefore makes
sense to modify the model to ensure that the second-stage problem is always
feasible. We introduce a slack variable $z \in Y$ and constant $\alpha'>0$;
the second-stage variable is then defined by
$x_2=(y,z) \in X_2:= L^\infty(\Omega,Y) \times L^\infty(\Omega,Y)$.
This modified problem is
\begin{equation}
\label{eq:model-PDE-UQ-state-constraints-slack}
\tag{$\text{P$'_s$}$}
 \begin{aligned}
&  \min_{(\u,x_2) \in X_1 \times X_2}\quad  \frac{1}{2} \E \left[ \lVert y - y_D \rVert_{\U}^2  +\alpha' \lVert z \rVert_{\U}^2 \right]+ \frac{\alpha}{2} \lVert \u \rVert_{\U}^2 \\
   &\quad  \text{s.t.} \left\{\begin{aligned}
   x_1 &\in C_1, \\
   y(\cdot,\omega) &\in C_2 \text{ a.s.},\\
   z(\cdot,\omega) &\in C_2 \text{ a.s.},\\
  -\nabla \cdot (a(s,\omega) \nabla y(s,\omega)) &=  \u(s) + g(s,\omega) &\text{ on }& D \times \Omega \text{ a.e.}, \\
  y(s,\omega) &= 0& \text{ on }& \partial D \times \Omega \text{ a.e.},\\
  y(s,\omega) &\leq \psi(s,\omega) + z(s,\omega)&\text{ on }& D \times \Omega \text{ a.e.}
   \end{aligned}\right.
 \end{aligned}
\end{equation}
It is clear that Problem~\eqref{eq:model-PDE-UQ-state-constraints-slack}
now satisfies the condition~\eqref{eq:relatively-complete-recourse} of
relatively complete recourse if $C_2$ and $\psi$ are chosen appropriately.
For example, by
\cref{lemma:existence-uniqueness-model-problem}, one immediately
obtains a unique solution $y$ to the PDE constraint where
$\|y\|_{L^\infty(\Omega, Y)} \le c$ whenever $C_1$ is bounded in $L^2(D)$.
Then, if $C_2$ is a sufficiently large ball $z:=\psi - y$ is again in
$C_2$ and thus
the pair $(y,z)$ is feasible. 

In this model, we have
\begin{align*}
J_1(\u) &=\frac{\alpha}{2} \lVert \u \rVert_{\U}^2,\\
J_2(\u, \y) &= \frac{1}{2}  \lVert y - y_D \rVert_{\U}^2 + \frac{\alpha'}{2} \lVert z \rVert_{\U}^2,\\
e(\u,\y,\omega) &= \cA(\omega) y - \cB(\omega) \u - g(\cdot,\omega) \in Y^*,\\
i(\u,\y,\omega) & = y - \psi(\cdot,\omega) - z \in Y,\\
K & = \{ y \in Y: y(s) \geq 0  \quad \text{on } D \text{ a.e.}\}.
\end{align*}
It is clear that  \cref{assumption:general-problem} is satisfied
here. Indeed, $J(\u,\y) = J_1(\u) + J_2(\u,\y)$ is convex, everywhere
defined, and continuous in $X_1 \times X_2$. The function $e(\u,\y,\omega)$
is linear and continuous in $(\u,\y)$; measurability follows from the
assumed measurability of the underlying operators. Additionally,
$i(\u,\y,\omega)$ is linear and continuous in $\y$ as well as measurable
since $\psi \in L^\infty(\Omega,Y)$. 

Now, we can formulate KKT conditions for
Problem~\eqref{eq:model-PDE-UQ-state-constraints-slack}.

\begin{lemma}
\label{lemma:KKT-general}
Suppose  \cref{assumption:regularity-random-field} is satisfied
and $C_1, C_2$ are bounded. Then $(\bx,\blam)$ is a saddle point of the
Lagrangian~\eqref{eq:Lagrangian} for
Problem~\eqref{eq:model-PDE-UQ-state-constraints-slack} if and only if
there exist $\rho \in L^1(\Omega,\U^*)$, $\bp \in L^1(\Omega,Y)$, and
$\bl \in L^1(\Omega, Y^*)$ such that for all $x_1 \in C_1$ and all $(y,z) \in C_2\times C_2,$
\begin{subequations}
\begin{align}
( \alpha \bar{x}_1 + \E[\rho], x_1 - \bar{x}_1 )_{\U} &\geq 0, \label{eq:FOC-PDE-problem1}\\
\cB^*(\omega) \bar{\lambda}_{e,\omega} +\rho_\omega & = 0,\label{eq:FOC-PDE-problem2}\\
 (\bar{y}_\omega-y_D, y-\bar{y}_\omega)_{X_1}  +\langle \cA^*(\omega)\bar{\lambda}_{e,\omega} +\bar{\lambda}_{i,\omega}, y-\bar{y}_\omega\rangle_{Y^*,Y} &\geq 0,\label{eq:FOC-PDE-problem3}\\
  ( \alpha' \bar{z}_\omega, z-\bar{z}_{\omega})_{X_1}
  -\langle \bar{\lambda}_{i,\omega}, z-\bar{z}_\omega\rangle_{Y^*,Y} &\geq 0,\label{eq:FOC-PDE-problem3'}\\
  \cA(\omega) \bar{y}_\omega - \cB(\omega) \bu - g_\omega &= 0,\label{eq:FOC-PDE-problem4}\\
\bar{\lambda}_{i,\omega} \in K^{\oplus}, \quad \bar{y}_\omega-\bar{z}_\omega \leq_K \psi_\omega, \quad
 \langle \bar{\lambda}_{i,\omega}, \bar{y}_\omega-\bar{z}_\omega-\psi_\omega \rangle_{Y^*,Y} &= 0,\label{eq:FOC-PDE-problem5}
\end{align}
\end{subequations}
where~\eqref{eq:FOC-PDE-problem2}--\eqref{eq:FOC-PDE-problem5} hold for
almost all $\omega \in \Omega.$ These conditions are necessary and
sufficient for optimality.
\end{lemma}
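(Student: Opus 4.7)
The plan is to invoke \cref{thm:KKT-basic-conditions} directly, as the abstract conditions (i)--(iii) there already give a pointwise characterization of saddle points in the general form we need; all that remains is the bookkeeping required to specialize to Problem~\eqref{eq:model-PDE-UQ-state-constraints-slack}. First I would verify that \cref{assumption:general-problem} is in force. Convexity of $J_1+J_2$ is clear from the quadratic form; $e(\u,y,z,\omega)=\cA(\omega)y-\cB(\omega)\u-g(\cdot,\omega)$ is affine and, by \cref{assumption:regularity-random-field} together with \cref{thm:Hans}, measurable with the required essential boundedness on bounded sets; similarly $i(\u,y,z,\omega)=y-\psi(\cdot,\omega)-z$ is affine, $K$-convex in the stated cone, and measurable because $\psi\in L^\infty(\Omega,Y)$. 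Since $Y$ is reflexive, setting $W=Y^*$ and $R=Y$ in \cref{thm:KKT-basic-conditions} produces the multiplier $\bp\in L^1(\Omega,W^*)\cong L^1(\Omega,Y)$ and $\bl\in L^1(\Omega,R^*)=L^1(\Omega,Y^*)$, which matches the spaces announced in the statement.

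Condition (iii) of \cref{thm:KKT-basic-conditions}, after plugging in the concrete $e$ and $i$, is literally the state equation~\eqref{eq:FOC-PDE-problem4}, the sign/feasibility requirement $\bl(\omega)\in K^\oplus$ with $\bar{y}_\omega-\bar{z}_\omega\leq_K\psi_\omega$, and the complementarity relation, i.e.~\eqref{eq:FOC-PDE-problem5}. Condition (i) applied to $J_1(\u)=\tfrac{\alpha}{2}\|\u\|_{\U}^2$ is the smooth unconstrained convex minimization over $C_1$, whose first-order condition is exactly the variational inequality~\eqref{eq:FOC-PDE-problem1}.

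For condition (ii) I would exploit that the a.s.~minimand is separable in the three block variables $(\u,y,z)\in\U\times C_2\times C_2$ because $J_2$ splits as $\tfrac{1}{2}\|y-y_D\|_{\U}^2+\tfrac{\alpha'}{2}\|z\|_{\U}^2$, the coupling $\langle\bp,e\rangle$ is affine in $(\u,y)$ only, and $\langle\bl,i\rangle$ is affine in $(y,z)$ only. The block in $\u$ is unconstrained (minimization over all of $\U$), so setting the gradient to zero yields $-\cB^*(\omega)\bp(\omega)-\rho(\omega)=0$ a.s., which is~\eqref{eq:FOC-PDE-problem2}. The block in $y$, constrained to $C_2$, gives the variational inequality~\eqref{eq:FOC-PDE-problem3} by differentiating the tracking term and the two affine couplings with $\bp$ and $\bl$. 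The block in $z$, again constrained to $C_2$, gives~\eqref{eq:FOC-PDE-problem3'}. The ``necessary and sufficient for optimality'' clause is then immediate: sufficiency follows from the defining saddle-point inequalities and weak duality, and necessity is the content of \cref{thm:infP-maxD} applied to~\eqref{eq:model-PDE-UQ-state-constraints-slack}, whose relatively complete recourse and strict feasibility are provided by the slack variable $z$ together with the assumption that $C_2$ is large enough (as discussed before the lemma).

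The only nontrivial step I anticipate is the decoupling in condition (ii): one must argue that the joint integrated minimum in $(\u,y,z)$ with $\u$ deterministic and $(y,z)$ pointwise constrained is equivalent to the three separate first-order conditions, two of them pointwise a.s.~in $\omega$. This is already encoded in condition (ii) of \cref{thm:KKT-basic-conditions} (whose proof used a measurable selection through a countable dense subset of $C_2$), so no new selection argument is required here; everything else is a routine, entirely quadratic, differentiation.
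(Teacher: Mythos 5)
Your route is the same as the paper's: specialize conditions (i)--(iii) of \cref{thm:KKT-basic-conditions} with $W=Y^*$, $R=Y$, read off (iii) as \eqref{eq:FOC-PDE-problem4}--\eqref{eq:FOC-PDE-problem5}, and obtain \eqref{eq:FOC-PDE-problem1}--\eqref{eq:FOC-PDE-problem3'} by block-wise first-order conditions for the jointly convex minimand over the product set $\U\times C_2\times C_2$ (the decoupling is indeed routine here, since the feasible set is a product and directions can be varied in each block separately). That part is fine.

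The gap is in the final ``necessary and sufficient for optimality'' clause. You assert that strict feasibility of \eqref{eq:model-PDE-UQ-state-constraints-slack} is ``discussed before the lemma,'' but the discussion preceding the lemma only addresses relatively complete recourse \eqref{eq:relatively-complete-recourse}; the condition $0\in\textup{int}\,\textup{dom}\,v$ from \eqref{eq:constraint-qualification} is a separate hypothesis of \cref{thm:infP-maxD} and is the one genuinely nontrivial verification in this proof. It does not come for free from the slack variable: one must show that the constraints remain satisfiable under \emph{all} sufficiently small perturbations $(u_e,u_i)$ in the $L^\infty(\Omega,Y^*)\times L^\infty(\Omega,Y)$ norm. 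For the inequality block this requires the ordering cone $K\subset Y=W_0^{1,p}(D)$ to have interior points (cf.\ \cref{rem:Slater-condition-explanation}), which is exactly why the paper takes $p>2$ on a two-dimensional domain: the embedding $W^{1,p}(D)\hookrightarrow C(\bar D)$ lets one extract a margin $\varepsilon$ by which $y_\omega-\psi_\omega-z_\omega$ can be made strictly negative, so that perturbations $u_i$ with $\|u_i\|_\infty<\varepsilon$ are still feasible; for the equality block one uses the uniform invertibility of $\cA(\omega)$ from \cref{lemma:existence-uniqueness-model-problem}. Without this argument the application of \cref{thm:infP-maxD} is unjustified, and the claim would actually fail in, say, $L^2$-type state spaces where the cone has empty interior. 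You should add this verification; everything else in your proposal matches the paper's proof.
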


\begin{proof}
We apply the optimality conditions (i)--(iii) from
\cref{thm:KKT-basic-conditions}. Let 
$f_1(\u):=J_1(\u) + \langle \E[\rho],\u \rangle_{\U^*,\U}$. We recall that
the optimum $\u$ over $C_1$ is attained if and only if
$\langle f_1'(\bu), \u - \bu \rangle_{\U^*,\U} \geq 0$ for all $\u \in C_1$.
Hence condition (i) is equivalent to~\eqref{eq:FOC-PDE-problem1}.
Now, we define
\begin{align*}
f_2(\u,\y,\omega) &:=J_2(\u,\y) + \langle \bar{\lambda}_{e,\omega}, e(\u,\y,\omega) \rangle_{Y,Y^*}\\
&\quad + \langle \bar{\lambda}_{i,\omega}, i(\u,\y,\omega) \rangle_{Y^*,Y} - \langle \rho_\omega,\u\rangle_{\U^*,\U}.
\end{align*}
Now, (ii) is equivalent to stationarity of $f_2$ yielding~\eqref{eq:FOC-PDE-problem2}--\eqref{eq:FOC-PDE-problem3'}.
To see this, we compute $$D_{\u} f_2(\u,\y(\omega),\omega)[h] = \langle- \cB^*(\omega) \bar{\lambda}_{e,\omega} - \rho_\omega, h \rangle_{\U^*,\U},$$
so $D_{\u} f_2(\bu,\by(\omega),\omega) = 0$ a.s.~if and only
if~\eqref{eq:FOC-PDE-problem2} holds. 
Recalling that $x_2 = (y,z)$, we compute
\begin{align*}
D_{y} f_2(\u,\y(\omega),\omega)[k_1] &= ( y_\omega - y_D, k_1)_{X_1} + \langle\cA^*(\omega) \bar{\lambda}_{e,\omega} +\bar{\lambda}_{i,\omega}, k_1 \rangle_{Y^*,Y},\\
D_{z} f_2(\u,\y(\omega),\omega)[k_2] &= (\alpha' z_\omega, k_2)_{X_1} 
 -\langle \bar{\lambda}_{i,\omega}, k_2 \rangle_{Y^*,Y},
\end{align*} 
which at the optimum $\by=(\bar{y},\bar{z})$ over $C_2\times C_2$ is equivalent 
to~\eqref{eq:FOC-PDE-problem3}--\eqref{eq:FOC-PDE-problem3'}.
Condition (iii) is clearly equivalent to~\eqref{eq:FOC-PDE-problem4}
and \eqref{eq:FOC-PDE-problem5}. 

For the final statement, it suffices to verify that
Problem~\eqref{eq:model-PDE-UQ-state-constraints-slack} is strictly
feasible. 
Since $p>2$ and $D \subset \R^2$ is bounded, $W^{1,p}(D)$ is compactly
embedded in $C(\bar{D})$. Note that $y_\omega, z_\omega \in W^{1,p}(D)$
satisfying
\begin{equation*}
  i(\u,\y(\omega),\omega)=y_\omega-\psi(\cdot,\omega) - z_\omega <_K 0
\end{equation*}
means $\eta_\omega(s):=y_\omega(s) -\psi(s,\omega)-z_\omega(s)  < 0 \quad \text{a.e. on } \bar{D}.$
Now, the continuous function $\eta_\omega$ must take its maximum on the
compact set $\bar{D}$, so  there exists a $\varepsilon=\varepsilon(\omega)$
such that 
$\eta_\omega=i(\u,\y(\omega),\omega) <-\varepsilon$ a.e.~on $\bar{D}.$
If $v_\omega \in W^{1,p}(D)$ is chosen such that
$\|v_\omega\|_{\infty} \leq \delta(\omega)$, then
\begin{align*}
i(\u,\y(\omega)+v_\omega,\omega) &= i(\u,\y(\omega),\omega) +v_\omega \leq  -\varepsilon + \|v_\omega\|_\infty \leq -\varepsilon + \delta(\omega)
\end{align*}
and therefore
$i(\u,\y(\omega),\omega)<v_\omega$ if $\delta(\omega) < \epsilon.$
By \cref{thm:minP-supD} and \cref{thm:infP-maxD}, these
conditions are necessary and sufficient.
\end{proof}

Finally, let us note that taking $\alpha' \rightarrow \infty$, the
primal variables of Problem~\eqref{eq:model-PDE-UQ-state-constraints-slack}
converge to those of Problem~\eqref{eq:model-PDE-UQ-state-constraints}
assuming that the latter has a solution.
\begin{theorem}
Assume that $0 \in C_2$, Problem~\eqref{eq:model-PDE-UQ-state-constraints} has at
least one  optimal solution $(x_1,y) \in \U \times L^\infty(\Omega,Y)$ and let
$(x_1^{\alpha'},y^{\alpha'},z^{\alpha'}) \in \U \times L^\infty(\Omega,Y) \times
L^\infty(\Omega,Y)$ be solutions to Problem~\eqref{eq:model-PDE-UQ-state-constraints-slack}.
Then for any sequence $\{\alpha_n'\}$ such that $\alpha_n' \rightarrow \infty$, the sequence
$\{ (x_1^{\alpha_n'},y^{\alpha_n'}) \}$ has a (weak, strong) accumulation point
$(x_1^{\infty},y^{\infty})$,
i.e., $x_1^{\alpha_n'} \rightharpoonup x_1^\infty$ in $\U$ and
$y^{\alpha_n'}\rightarrow y^\infty $ in $L^\infty(\Omega,Y)$,
and each such accumulation point solves Problem~\eqref{eq:model-PDE-UQ-state-constraints}.
\end{theorem}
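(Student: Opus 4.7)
The plan is to run a standard penalization argument, exploiting that $(x_1,y,0)$ is feasible for \eqref{eq:model-PDE-UQ-state-constraints-slack} whenever $(x_1,y)$ is feasible for \eqref{eq:model-PDE-UQ-state-constraints} (this uses $0\in C_2$ and $y\le\psi$ pointwise). First I would compare the value of \eqref{eq:model-PDE-UQ-state-constraints-slack} at $(x_1^{\alpha_n'},y^{\alpha_n'},z^{\alpha_n'})$ with its value at the competitor $(x_1,y,0)$ to obtain the a priori bound
\begin{equation*}
\tfrac{1}{2}\E[\|y^{\alpha_n'}-y_D\|_{X_1}^2]+\tfrac{\alpha_n'}{2}\E[\|z^{\alpha_n'}\|_{X_1}^2]+\tfrac{\alpha}{2}\|x_1^{\alpha_n'}\|_{X_1}^2 \le \tfrac{1}{2}\E[\|y-y_D\|_{X_1}^2]+\tfrac{\alpha}{2}\|x_1\|_{X_1}^2.
\end{equation*}
This simultaneously shows that $\{x_1^{\alpha_n'}\}$ is bounded in $X_1=L^2(D)$ and that $\E[\|z^{\alpha_n'}\|_{X_1}^2]=O(1/\alpha_n')\to 0$, so $z^{\alpha_n'}\to 0$ in $L^2(\Omega;L^2(D))$.

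Next, passing to a subsequence (not relabeled), I would extract $x_1^{\alpha_n'}\rightharpoonup x_1^\infty$ in the reflexive space $X_1$, with $x_1^\infty\in C_1$ by weak closedness of $C_1$. Because $D\subset\R^2$ is bounded and $p>2$, Rellich--Kondrachov gives the compact embedding $W^{1,p'}_0(D)\hookrightarrow L^2(D)$, whose dual $L^2(D)\hookrightarrow W^{-1,p}(D)$ is also compact, so in fact $x_1^{\alpha_n'}\to x_1^\infty$ strongly in $W^{-1,p}(D)$. Combining this with the uniform-in-$\omega$ bound on $\|\cA^{-1}(\omega)\|_{\cL(W^{-1,p},Y)}$ from \cref{lemma:existence-uniqueness-model-problem} yields
\begin{equation*}
\|y^{\alpha_n'}-y^\infty\|_{L^\infty(\Omega,Y)}\le c\,\|x_1^{\alpha_n'}-x_1^\infty\|_{W^{-1,p}(D)}\to 0,
\end{equation*}
where $y^\infty(\omega):=\cA(\omega)^{-1}(\cB(\omega)x_1^\infty+g(\omega))$. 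Thus $y^{\alpha_n'}\to y^\infty$ strongly in $L^\infty(\Omega,Y)$, $y^\infty$ solves the PDE by construction, and $y^\infty(\omega)\in C_2$ a.s.\ by closedness of $C_2$.

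For feasibility in \eqref{eq:model-PDE-UQ-state-constraints} it remains to verify the almost sure state bound. After a further subsequence I may assume $z^{\alpha_n'}\to 0$ a.e.\ on $D\times\Omega$, while the compact embedding $W^{1,p}(D)\hookrightarrow C(\bar D)$ together with the $L^\infty(\Omega,Y)$ convergence gives, for a.e.\ $\omega$, uniform convergence $y^{\alpha_n'}(s,\omega)\to y^\infty(s,\omega)$ on $\bar D$. Passing to the limit in $y^{\alpha_n'}(s,\omega)\le\psi(s,\omega)+z^{\alpha_n'}(s,\omega)$ yields $y^\infty\le\psi$ a.e.\ on $D\times\Omega$, so $(x_1^\infty,y^\infty)$ is feasible for \eqref{eq:model-PDE-UQ-state-constraints}.

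Optimality then follows by dropping the nonnegative slack term and using weak lower semicontinuity of the quadratic objective:
\begin{equation*}
\tfrac{1}{2}\E[\|y^\infty-y_D\|_{X_1}^2]+\tfrac{\alpha}{2}\|x_1^\infty\|_{X_1}^2\le\liminf_{n\to\infty}\left(\tfrac{1}{2}\E[\|y^{\alpha_n'}-y_D\|_{X_1}^2]+\tfrac{\alpha}{2}\|x_1^{\alpha_n'}\|_{X_1}^2\right),
\end{equation*}
and the right-hand side is bounded above by the value of \eqref{eq:model-PDE-UQ-state-constraints} at the given minimizer $(x_1,y)$ via the a priori bound. Hence $(x_1^\infty,y^\infty)$ attains the minimum. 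The main obstacle I anticipate is the passage to the limit in the almost sure state constraint: the a priori bound on $z^{\alpha_n'}$ lives only in $L^2(\Omega;L^2(D))$, not in $L^\infty(\Omega;Y)$, so I cannot argue at the level of essential suprema and must instead descend to pointwise a.e.\ convergence, exploiting the Sobolev embedding into $C(\bar D)$ to combine it with the strong $L^\infty(\Omega,Y)$ convergence of $y^{\alpha_n'}$.
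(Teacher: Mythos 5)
Your proposal is correct and follows essentially the same route as the paper: compare with the feasible competitor $(x_1,y,0)$ to get the a priori bound and $\E[\lVert z^{\alpha_n'}\rVert_{X_1}^2]\to 0$, extract a weak limit $x_1^\infty\in C_1$, upgrade to strong convergence of the states via the compact embedding $L^2(D)\hookrightarrow W^{-1,p}(D)$ and the uniform bound on $\mathcal{A}^{-1}(\omega)$, pass to the limit in the constraints, and conclude by lower semicontinuity. If anything, you are more careful than the paper on the passage to the limit in the almost sure state bound (extracting an a.e.\ convergent subsequence of $z^{\alpha_n'}$ and using $W^{1,p}(D)\hookrightarrow C(\bar D)$), a step the paper states without detail.
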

\begin{proof}
  Denote by $    J^{\alpha'}(x_1,y,z) = \frac{1}{2} \E \left[ \lVert y - y_D \rVert_{\U}^2  +\alpha' \lVert z \rVert_{\U}^2 \right]+ \frac{\alpha}{2} \lVert \u \rVert_{\U}^2$
  the objective of
  Problem~\eqref{eq:model-PDE-UQ-state-constraints-slack}.
  The objective $ J^\infty(x_1,y) = \frac{1}{2} \E \left[ \lVert y - y_D \rVert_{\U}^2 \right]+ \frac{\alpha}{2} \lVert \u \rVert_{\U}^2$ corresponds to Problem~\eqref{eq:model-PDE-UQ-state-constraints}.
  By assumption, $(x_1,y,0)$ is a feasible point for all $\alpha'$ and
  thus
  \[
    J^{\alpha'}(x_1^{\alpha'},y^{\alpha'},z^{\alpha'}) \le
    J^{\alpha'}(x_1,y,0) = J^\infty(x_1,y).
  \]
  Consequently $x_1^{\alpha'}$ is bounded in $\U$ and $\E[\lVert
  z^{\alpha_n'} \rVert_{\U}^2] \rightarrow 0$. By convexity and
  closedness of $C_1$ we have a weakly convergent subsequence 
  (again denoted by $x_1^{\alpha'_n}$) such that
  $x_1^{\alpha'_{n}} \rightharpoonup x_1^{\infty} \in C_1$.
  By compactness of the embedding $\U \subset Y^*$ this implies strong convergence, of the same subsequence,
  $y^{\alpha'_n} \rightarrow y^\infty\in C_2$ in $L^\infty(\Omega,Y)$
  and by linearity of the PDE $x_1^\infty$ and $y^\infty$ solve the
  PDE. Convergence of $y^{\alpha'_n} \rightarrow y^\infty$ and $\E[\lVert
  z^{\alpha'_n} \rVert_{\U}^2] \rightarrow 0$ show that the inequality $ y^\infty(s,\omega) \le \psi(s, \omega)$
  holds true a.e.~on $D \times \Omega$. Consequently the limit is
  feasible for Problem~\eqref{eq:model-PDE-UQ-state-constraints}.
  Weak lower semicontinuity of $\lVert\cdot \rVert_{\U}$ shows 
  \[
    J(x_1^\infty,y^\infty) \le J(x_1^{\alpha'},y^{\alpha'}) \le
    J_{\alpha'}(x_1^{\alpha'},y^{\alpha'},z^{\alpha'}) \le J(x_1,y)
  \]
  and thus $(x_1^\infty,y^\infty)$ is a solution of
  Problem~\eqref{eq:model-PDE-UQ-state-constraints}.

  Clearly the argument holds for any such convergent subsequence.  
\end{proof}

\subsection{Outlook}
In addition to the Problem~\eqref{eq:model-PDE-UQ-state-constraints-slack}, there are a number of other potential applications to our theory.
For instance, in the optimal control of ordinary differential equations
(ODEs) with uncertainties (cf.~\cite{Phelps2016}), the addition of a
constraint on the state would also require essentially bounded states
in order to satisfy constraint qualifications. To use a sample average
approximation (SAA) in their work, optimality conditions were needed and
our theory could also be of use in their linear example (the design of a
control to stabilize a harmonic oscillator). For applications to shape
optimization under uncertainty
(cf.~\cite{Atwal2012, Conti2008, Geiersbach2021a}), it is certainly
desirable in, e.g., a linear elasticity model to require pointwise bounds
on the solution to the corresponding PDE, which represents the displacement
field of a shape. Here, the control-to-state mapping is nonlinear and therefore our theory 
is not immediately applicable; further research would be desirable. In the development of
algorithms, we note that for (deterministic, infinite-dimensional) state
constraints, penalty methods are frequently employed due to unruly
singular terms arising in KKT conditions. Therefore penalizing almost
sure state constraints the way we propose in the previous section is
quite natural and could easily be modified for the above-mentioned
applications in the optimal control of ODEs with uncertainties and shape
optimization. Additionally, by \cref{rem:discrete-probability-space},
the relatively complete recourse condition is also unproblematic as soon
as one uses an SAA approximation or the underlying model has only finitely
many scenarios as in \cite{Atwal2012}.

\section{Conclusion}
\label{sec:Conclusion}
In this paper, we focused on obtaining necessary and sufficient first-order
optimality conditions for a class of stochastic convex optimization
problems. The first stage variable $x_1$ was assumed to belong to a
reflexive and separable Banach space, and the second-stage variable $x_2$
was assumed to be an essentially bounded random variable having an image in
a reflexive and separable Banach space. While the study of such problems in
finite dimensions is classical, going back to a series of papers from the
1970s by Rockafellar and Wets, its treatment in Bochner spaces, although
cursorily handled in~\cite{Rockafellar1971, Rockafellar1974}, was not
complete enough to handle a class of problems of increasing interest,
namely PDE-constrained optimization under uncertainty. In such problems,
it is desirable to find a control $x_1$ such that a partial differential
equation depending on the control is satisfied. The additional pointwise
constraints on the solution to the PDE presented surprising difficulties.
In order to obtain necessary and sufficient conditions for optimality, we
built on the decomposition result provided by
Ioffe and Levin~\cite{Ioffe1972}, in which
the Bochner space $L^\infty(\Omega,X)$ is decomposed into its absolutely
continuous part and a singular part. We find that the singular part
vanishes in the optimality conditions if strict feasibility and relatively
complete recourse conditions are satisfied. This provides necessary and
sufficient conditions for optimality with integrable Lagrange multipliers.
While the example model problem we chose to illustrate the theory involved
smooth functions, we remark that the optimality conditions do not require
smoothness of the objective functions. Therefore we believe our theory to
be applicable to more general risk-averse problems.

\appendix
\section{Appendix}
\paragraph{Expansion of generalized Lagrangian~\eqref{eq:Lagrangian}} 
If $x \not\in X_0$, then $x \not\in \textup{dom}\, \varphi(\cdot,u)$ and
therefore $L(x,\lambda) = \infty$ by definition
of~\eqref{eq:generalized-Lagrangian-definition}. Now we observe the case $x \in X_0$. The constraint
$i(\u, \y(\omega), \omega) \leq_K u_i(\omega)$ is equivalent to
$u_i(\omega) - i(\u, \y(\omega), \omega) \in K.$ 
Since $x \in X_0$, $\varphi$ can be redefined equivalently by
\begin{equation*}
  \varphi(x,u) := j(x) + \E[\delta_{\{ u_e(\cdot)\}}(e,\u,\y(\cdot),\cdot)] + \E[\delta_K(u_i(\cdot) - i(\u,\y(\cdot),\cdot))].
\end{equation*}
(The equivalence is clear after one notices that the indicator function is
non-negative.) Expanding~\eqref{eq:generalized-Lagrangian-definition},
we get
\begin{align*}
L(x,\lambda) =  j(x) + &\inf_{u \in U} \big\lbrace \E[\delta_{\{ u_e(\cdot)\}}(e(\u,\y(\cdot),\cdot))]\\
&  \quad \quad +\E[\delta_{K}(u_i(\cdot)-i(\u,\y(\cdot),\cdot))] + \langle u,\lambda\rangle_{U,\Z}\big\rbrace.
\end{align*}
Recalling the definition of the pairing~\eqref{eq:dual-pairing}, we first
see that
\begin{equation}
\label{eq:appendix-proof1}
\begin{aligned}
  &\inf_{u_e \in L^\infty(\Omega,W)} \int_{\Omega} \delta_{\{ u_e(\omega)\}}(e(\u,\y(\omega),\omega)) + \langle u_e(\omega),\lambda_e(\omega) \rangle_{W^*,W} \D \pP(\omega) \\
& \qquad= \int_\Omega \langle e(\u,\y(\omega),\omega),\lambda_e(\omega) \rangle_{W^*,W} \D \pP(\omega) \\
& \qquad \quad +\inf_{z \in L^\infty(\Omega,W)} \int_\Omega \delta_{\{0\}}(z(\omega)) - \langle z(\omega),\lambda_e(\omega) \rangle_{W^*,W}\D \pP(\omega)\\
  & \qquad=  \int_\Omega \langle e(\u,\y(\omega),\omega),\lambda_e(\omega) \rangle_{W^*,W} \D \pP(\omega) - \int_{\Omega}\delta^*_{\{0\}} (\lambda_e(\omega)) \D \pP(\omega)\\
& \qquad= \int_{\Omega} \langle e(\u,\y(\omega),\omega), \lambda_e(\omega)\rangle_{W^*,W} \D \pP(\omega),
\end{aligned}
\end{equation}
where in the last step, we used that the conjugate of the indicator
function is equal to the support function. Similarly, 
\begin{equation}
\label{eq:appendix-proof2}
\begin{aligned}
  & \inf_{u_i \in L^\infty(\Omega,R)} \int_{\Omega} \delta_{K}\big(u_i(\omega)-i(\u,\y(\omega),\omega)\big) +\langle u_i(\omega),\lambda_i(\omega)\rangle_{R,R^*} \D \pP(\omega)\\
& \quad = \int_{\Omega} \langle i(\u,\y(\omega),\omega), \lambda_i(\omega) \rangle_{R,R^*} \D \pP(\omega) \\
& \quad \quad - \sup_{z \in L^\infty(\Omega,R)} \int_{\Omega} \delta_K(-z(\omega)) - \langle z(\omega),\lambda_i(\omega) \rangle_{R,R^*} \D \pP(\omega)\\
 & \quad = \int_{\Omega} \langle i(\u,\y(\omega),\omega), \lambda_i(\omega) \rangle_{R,R^*} - \sup_{z' \in -K} \langle z',\lami(\omega)\rangle_{R,R^*} \D \pP(\omega) \\
 & \quad = \int_{\Omega} \langle i(\u,\y(\omega),\omega), \lambda_i(\omega) \rangle_{R,R^*} - \delta_{K^{\oplus}}(\lami(\omega))  \D \pP(\omega).
\end{aligned}
\end{equation}
If $\lambda_i(\omega) \not\in K^{\oplus}$, then the integral is equal to $-\infty$.
Otherwise, if $\lambda \in \Lambda_0$ (and $x \in X_0$), we get after
combining~\eqref{eq:appendix-proof1} and~\eqref{eq:appendix-proof2} the
expression
\begin{equation*}
  L(x,\lambda) = j(x) +\E[\langle e(\u,\y(\cdot),\cdot), \lambda_e(\cdot)\rangle_{W^*,W}] + \E[ \langle i(\u,\y(\cdot),\cdot), \lambda_i(\cdot) \rangle_{R,R^*}].
\end{equation*}

\bibliographystyle{abbrvnat}
\bibliography{references}
\end{document}